\newcolumntype{C}[1]{>{\centering\arraybackslash}p{#1}}
\newtheorem{theorem}{Theorem}[section]
\newtheorem{lemma}[theorem]{Lemma}
\newtheorem{corollary}[theorem]{Corollary}
\newtheorem{proposition}[theorem]{Proposition}
\newtheorem{conjecture}[theorem]{Conjecture}
\theoremstyle{definition}
\newtheorem{remark}[theorem]{Remark}
\title[Hyperelliptics over Characteristic 2 Fields]{On Weil Polynomials of Hyperelliptic Curves  over Finite Fields of Characteristic 2}
\author{Matvey~Borodin}
\address{Department of Mathematics, Massachusetts Institute of Technology, 182 Memorial Dr, Cambridge, MA 02139}
\email{matveyborodin1@gmail.com}
\author{Liam~May}
\address{Department of Mathematics, Massachusetts Institute of Technology, 182 Memorial Dr, Cambridge, MA 02139}
\email{liammay2@mit.edu}
\thanks{The first author was supported by the UROP fund for First-Year and Sophomore students.}
\thanks{The second author was supported by the Paul E. Gray (1954) UROP Fund.}
\date{August 2025}
\newcommand{\F}{\mathbb{F}}
\newcommand{\Z}{\mathbb{Z}}
\newcommand{\R}{\mathbb{R}}
\renewcommand{\P}{\mathbb{P}}
\renewcommand{\O}{\mathcal{O}}
\DeclareMathOperator{\GL}{GL}
\DeclareMathOperator{\PGL}{PGL}
\DeclareMathOperator{\Stab}{Stab}
\begin{document}

\begin{abstract} 
    We present new conditions which obstruct the existence of hyperelliptic Jacobians in isogeny classes of abelian varieties over finite fields of characteristic 2. We show that Weil polynomials of Jacobians cannot have coefficients in certain residue classes modulo 2, extending the approach of~\cite{costa2020}. We prove that for 3- and 4-dimensional abelian varieties over $\F_{2^n}$, as $n \rightarrow\infty$, the parities of the Weil coefficients asymptotically equidistribute. Further, we show that these obstructions disqualify $\frac12$ of all 3-dimensional isogeny classes and $\frac 58$ of all 4-dimensional isogeny classes from containing a hyperelliptic Jacobian. Additionally, we present a practical enumeration algorithm which generates all isomorphism classes of hyperelliptic curves of arbitrary genus over almost any finite field of characteristic 2 based on existing algorithms over $\F_2$. Our analysis shows the runtime to be $\tilde{\O}(2^{n(2g-1)})$ expected, and $\tilde{\O}(2^{n(2g+2)})$ worst case. This runtime improvement renders the algorithm practical for fields other than $\F_2$.

\end{abstract}

\maketitle

\section{Introduction}
The motivation for this paper is to determine which isogeny classes of abelian varieties over a finite field $\F_q$ contain the Jacobian of a hyperelliptic curve. It is a well-known consequence of the Honda-Tate theorem \cite{HONDA, tate} that every isogeny class of abelian varieties of dimension $g$ over a finite field can be uniquely identified with the characteristic polynomial of its Frobenius endomorphism. All such characteristic polynomials are Weil polynomials, which are degree $2g$ monic polynomials over $\Z$ with roots of modulus $\sqrt{q}$. Every such polynomial must have the form $x^{2g} + a_1x^{2g-1} + \ldots+ a_{g-1}x^{g+1}+a_gx^g + a_{g-1}qx^{g-1}+\ldots+a_1q^{g-1}x + q^g$. 

The question of which isogeny classes contain genus $g$ hyperelliptic Jacobians is answered in \cite{WATERHOUSE1969} for genus 1, and has been well studied in genus 2, with a complete classification presented by~\citeauthor{Howe2009} in \cite{Howe2009}. Far less is currently known about the case of genus 3, though there are a number of known conditions on the coefficients of a Weil polynomial that disprove the existence of a hyperelliptic Jacobian in the associated isogeny class. In this paper we refer to such conditions colloquially as obstructions. We provide a detailed survey of known and conjectured obstructions applicable to the genus 3 case of this problem in \cite{borodinmaygeneral}.

In this paper, we restrict our attention to the case of hyperelliptic curves over finite fields of characteristic 2. One of the most powerful known obstructions for isogeny classes of genus 3 hyperelliptic Jacobians over fields of odd characteristic is proved in \cite{costa2020} by \citeauthor{costa2020} who show that an isogeny class of abelian threefolds with Weil polynomial
\[x^6 + sx^5 + tx^4 + ux^3 + tqx^2 + sq^2x + q^3\]
cannot contain the Jacobian of a hyperelliptic curve over $\F_q$ (with $q$ odd) if $t \equiv 0 \pmod{2}$ and $u \equiv 1 \pmod{2}$. They also extend this condition to larger genera, though they don't provide a closed form of the condition.

In Section~\ref{sec:rest_mod_2}, we prove an analogous statement for genus 3 hyperelliptics over finite fields of characteristic 2:
\newtheorem*{thm:rest_mod_2}{Theorem \ref{thm:rest_mod_2}}
\begin{thm:rest_mod_2}
    Let $q$ be a power of 2. Then the isogeny class with Weil polynomial 
    $$x^6 + sx^5 + tx^4 + ux^3 + tqx^2 + sq^2x + q^3$$
    does not contain the Jacobian of a hyperelliptic curve in the following cases:
    \begin{enumerate}
        \item $(s, t, u) = (0, 1, 1) \pmod{2}$
        \item $(s, t, u) = (1, 0, 1) \pmod{2}$
    \end{enumerate}
\end{thm:rest_mod_2}
We conjecture that $(s, t, u) = (1, 1, 0) \pmod{2}$ is also an obstruction in Conjecture~\ref{conj:rest_mod_2} based on extensive computational evidence. 

In Theorem~\ref{thm:rest_genus4}, we prove a similar claim for Weil polynomials of 4-dimensional abelian varieties, leaving one case as Conjecture~\ref{conj:g4_rest}. We provide a general procedure for generating a list of similar parity obstructions for arbitrary genus, and remark on the asymptotic applicability of these rules.

In Section~\ref{sec:asymptotics}, we prove that as $q \to \infty$, Theorem~\ref{thm:rest_mod_2} applies to exactly $\frac 12$ of isogeny classes of 3-dimensional abelian varieties in Theorem~\ref{thm:ratio}, and argue based on computational data that Theorem~\ref{thm:rest_mod_2} identifies 100\% of isogeny classes of abelian threefolds without Jacobians of hyperelliptic curves as $q \to \infty$. Additionally, we prove that as $q\to\infty$, Theorem~\ref{thm:rest_genus4} applies to exactly $\frac 58$ of all isogeny classes of 4-dimensional abelian varieties over finite fields of characteristic 2 in Theorem~\ref{thm:ratiog4}. 

Our work utilized the abelian varieties data available on the L-functions and Modular Forms Database \cite{lmfdb} (see \cite{dupuy2020isogenyclassesabelianvarieties} for a detailed overview of the database). The database contains the data of all 3- and 4-dimensional abelian varieties over finite fields of size up to 25 and 5, respectively. In attempting to compute data over larger fields, we utilized an existing algorithm for generating all genus 3 hyperelliptic curves over a given finite field of odd characteristic (introduced in \cite{howe2024enumeratinghyperellipticcurvesfinite}), but this method fails for finite fields of characteristic 2. \citeauthor{xarles2020censusgenus4curves} proposes an algorithm in \cite{xarles2020censusgenus4curves} to enumerate curves over $\F_2$, which was generalized to genus 5 \cite{dragutinović2022computingbinarycurvesgenus} and genus 6 \cite{huang2024censusgenus6curves}. With some modifications, the same principles can be applied over arbitrary finite fields of characteristic 2, but the runtime of the implementation given scales with $q^{12}$ for genus 3 in the number of SageMath polynomial operations, which becomes impractical for fields of size $\geq 4$.
In Section~\ref{sec:algorithm} we provide a new algorithm to enumerate hyperelliptic curves of arbitrary genus over all fields $\F_{2^n}$ with $\gcd(g+1, 2^n - 1) = 1$ and an alternate implementation which significantly reduces the runtime (allowing us to compute genus 3 and genus 4 hyperelliptics over fields of size up to $\F_{32}$ and $\F_8,$ respectively). We show the expected runtime to be quasi-linear in the size of the output and use our asymptotic bounds to give an upper bound on the number of hyperelliptics in arbitrary genus $g$. In practice, the algorithm appears to perform close to the expected runtime.

An implementation of our algorithm can be found in

\begin{center}
\url{https://github.com/bmatvey/char_2_hyperelliptics}. 
\end{center}

\section*{Acknowledgments}

First and foremost, we would like to thank our wonderful mentor, David Roe, for suggesting this problem and supporting us throughout our endeavors with invaluable advice. We would also like to thank Edgar Costa and Stefano Marseglia for taking the time to discuss our work with us and offer advice. Finally, we thank Everett Howe and Professor Kiran Kedlaya for their help. The first author was supported by the UROP Fund for First-Year and Sophomore Students and the second author was supported by the Paul E. Gray (1954) UROP Fund while working on this project. 

\section{Coefficient Parity Obstructions}\label{sec:rest_mod_2}

In \cite[Thm 2.8]{costa2020}, \citeauthor{costa2020} prove that for an isogeny class of 3-dimensional abelian varieties over a finite field $\F_q$ of odd characteristic, if the Weil polynomial $x^6 + sx^5 + tx^4 + ux^3 + tqx^2 + sq^2x + q^3$ satisfies $t \equiv 0 \pmod{2}$ and $u \equiv 1 \pmod{2}$, the isogeny class cannot contain the Jacobian of a genus 3 hyperelliptic curve. We generalize the techniques used by \citeauthor{costa2020} to finite fields of characteristic 2, proving a new pair of obstructions for genus 3 hyperelliptics. Additionally, we describe a simple technique that allows for the computation of such obstructions for arbitrary genus.

\subsection{Preliminaries}
We first establish necessary preliminaries. These are analogues of statements in \cite[\S 4]{costa2020}, with proofs adapted to fields of characteristic 2. 

Consider a hyperelliptic curve $C/\F_q$ where $\F_q$ has characteristic 2. Let $\pi: C \to \P^1$ be the canonical degree 2 cover of the projective line by $C$ given by $\pi: (x, y) \to x$. Define $W$ as the support of the ramification divisor of $\pi$; thus, $W$ is the set of geometric Weierstrass points of $C$. Since the field has characteristic 2, $\#W=r+1$, where $r$ is the 2-rank of $C$ (this follows from the Deuring-Shafarevich formula \cite[Cor. 1.8]{crew_etale}). The Weierstrass points are stable under the Frobenius endomorphism $\varphi$, which acts on $W$ by permutation. In particular, the action of $\varphi$ partitions the points $w_i \in W$ into orbits; let us denote the cardinalities of the orbits by $\{d_i\}$. Since $|W| = r + 1$, we have $\sum d_i = r + 1$.  

\begin{lemma} \label{lemma:costa41}
    For each $n \geq 1$, we have $\#W(\F_{q^n}) = \sum_{i: d_i \mid n} d_i$.
\end{lemma}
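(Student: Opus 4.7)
The plan is to use the standard characterization of $\F_{q^n}$-rational points as fixed points of the $n$-th power of Frobenius, combined with the orbit decomposition of $W$ under $\varphi$ that is already set up in the preceding paragraph.

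First I would recall that a geometric point $w$ on a variety defined over $\F_q$ lies in $W(\F_{q^n})$ if and only if $\varphi^n(w) = w$. Since $W$ itself is $\varphi$-stable, this means $W(\F_{q^n})$ is exactly the set of $\varphi^n$-fixed points inside $W$. Now, for a point $w$ in the $i$-th Frobenius orbit (of size $d_i$), the stabilizer of $w$ inside the cyclic group $\langle \varphi \rangle$ acting on that orbit is generated by $\varphi^{d_i}$, so $\varphi^n(w) = w$ if and only if $d_i \mid n$.

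Next I would note that if $d_i \mid n$, then every point in the $i$-th orbit is $\varphi^n$-fixed (not just one representative), since being fixed by $\varphi^n$ is a property invariant under the $\varphi$-action along the orbit. Thus each orbit of size $d_i$ with $d_i \mid n$ contributes exactly $d_i$ points to $W(\F_{q^n})$, and orbits with $d_i \nmid n$ contribute none. Summing over orbits yields
\[
\#W(\F_{q^n}) \;=\; \sum_{i:\, d_i \mid n} d_i,
\]
as desired.

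There is no serious obstacle here; the only point worth being careful about is the identification of $W(\F_{q^n})$ with $\varphi^n$-fixed points of $W$, which requires $W$ to be defined over $\F_q$ (equivalently, $\varphi$-stable as a set), and this is exactly what was observed just before the lemma statement from the fact that $W$ is the support of the ramification divisor of the $\F_q$-morphism $\pi$.
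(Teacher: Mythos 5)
Your proof is correct and follows essentially the same route as the paper: both decompose $W$ into Frobenius orbits and observe that a point in an orbit of size $d_i$ is $\F_{q^n}$-rational exactly when $d_i \mid n$, so each such orbit contributes $d_i$ points. Your version simply spells out the identification of $\F_{q^n}$-points with $\varphi^n$-fixed points, which the paper leaves implicit.
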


\begin{proof}
    Given a Weierstrass point $w_i \in W$ with orbit length $d_i$, the extensions of $\F_q$ in which $w_i$ is defined are exactly the fields $\F_{q^{md_i}}$ for integers $m$. Thus, in each field $\F_{q^n}$, we see $d_i$ Weierstrass points for each $d_i \mid n$ and 0 Weierstrass points for every other $d_i$.
\end{proof}

\begin{lemma}\label{lemma:costa42}
    For each $n \geq 1$, we have $\#C(\F_{q^n}) \equiv \#W(\F_{q^n}) \pmod{2}$.
\end{lemma}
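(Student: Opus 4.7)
The plan is to exploit the degree-two cover $\pi: C \to \P^1$ and count $\F_{q^n}$-points fiber-by-fiber. Let $B = \pi(W) \subset \P^1$ denote the branch locus. The first step is to observe that each Weierstrass point is the unique geometric preimage of its image under $\pi$, so $\pi$ restricts to a bijection $W \to B$ defined over $\F_q$, inducing in particular a bijection $W(\F_{q^n}) \to B(\F_{q^n})$ for every $n$.

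Given this, the proof reduces to a straightforward fiber count. I would partition $\P^1(\F_{q^n})$ into branch points and non-branch points. For each $P \in B(\F_{q^n})$, the fiber $\pi^{-1}(P)$ is a single $\F_{q^n}$-rational point of $C$, contributing $1$ to $\#C(\F_{q^n})$. For each $P \in \P^1(\F_{q^n}) \setminus B$, the geometric fiber consists of two distinct points swapped by the hyperelliptic involution; Frobenius either fixes both (contributing $2$) or exchanges them (contributing $0$). Summing over $\P^1(\F_{q^n})$ gives
\[
\#C(\F_{q^n}) = \#B(\F_{q^n}) + 2N
\]
for some non-negative integer $N$. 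Reducing modulo $2$ and substituting $\#B(\F_{q^n}) = \#W(\F_{q^n})$ yields the claim.

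The only step requiring care is the uniqueness of the preimage at branch points, which is the one place where characteristic 2 enters essentially. I would verify this by working with a local Artin--Schreier model $y^2 + h(x)y = f(x)$: the branch locus is cut out by $h = 0$, and at a branch point $x_0$ the equation collapses to $y^2 = f(x_0)$, which in characteristic 2 has the unique solution $y = \sqrt{f(x_0)}$ (Frobenius being a bijection on $\overline{\F_q}$). This gives the required bijection $W \to B$ and, by Galois equivariance, between $W(\F_{q^n})$ and $B(\F_{q^n})$. Beyond this local verification, no real obstacle arises; the argument is essentially a counting identity dressed up as a parity statement.
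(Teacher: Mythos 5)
Your proof is correct, but it takes the quotient route where the paper stays on the curve. The paper's proof simply notes that $C(\F_{q^n})$ is stable under the hyperelliptic involution $\iota$, that non-fixed rational points pair up under $\iota$, and that the fixed points are exactly the Weierstrass points; this gives the parity statement immediately, with nothing extra to verify. Your version pushes everything down to $\P^1$ and counts fibers, which forces you to establish the additional fact that a rational branch point has a unique \emph{and rational} preimage --- your Artin--Schreier observation that $y^2 = f(x_0)$ has the single solution $\sqrt{f(x_0)}$ and that squaring is an automorphism of $\F_{q^n}$ --- and, strictly speaking, your local check that the branch locus is cut out by $h=0$ only covers the affine chart, so ramification at the point(s) at infinity needs a routine coordinate change you do not mention. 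What your approach buys is that it is exactly the fiber-by-fiber strategy the paper itself uses for the finer congruence in Proposition~\ref{prop:costa47}, so it refines naturally to point counts modulo higher powers of $2$; the involution argument is the more economical way to get the mod-$2$ statement alone.
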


\begin{proof}
    Consider the hyperelliptic involution $\iota: C \to C$. For every $\F_{q^n}$-rational point $P$ on the curve $C$ over $\F_{q^n}$, $\iota(P)$ is also a rational point on $C$. Moreover, the Weierstrass points are exactly the fixed points of $\iota$. Thus, every non-Weierstrass point contributes 2 points to $\#C(\F_{q^n})$, while every Weierstrass point contributes 1 point, so $\#C(\F_{q^n}) \equiv \#W(\F_{q^n}) \pmod{2}$. 
\end{proof}

\begin{proposition}\label{prop:costa47}
    Given $n = 2^a \cdot m$ with $a \geq 1$ and $m$ odd, we have 
    \[\#C(\F_{q^n}) \equiv 2(q^n + 1) - \#W(\F_{q^n}) \equiv 2 - \#W(\F_{q^n}) \pmod{2^{a+1}}.\]
\end{proposition}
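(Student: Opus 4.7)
The plan is to compare $\#C(\F_{q^n})$ with $2(q^n+1) - \#W(\F_{q^n})$ via the fiber structure of the canonical double cover $\pi\colon C \to \P^1$. Because $\pi$ restricted to $W$ is a bijection onto its image (the branch locus), every $x \in \P^1(\F_{q^n}) \setminus \pi(W)$ either has both $\pi$-preimages in $C(\F_{q^n})$ or has neither; write $N_1$ and $N_2$ for the respective counts. Then
\[\#C(\F_{q^n}) = \#W(\F_{q^n}) + 2N_1, \qquad q^n + 1 = \#W(\F_{q^n}) + N_1 + N_2,\]
and eliminating $N_1$ yields $\#C(\F_{q^n}) = 2(q^n+1) - \#W(\F_{q^n}) - 2N_2$. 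It therefore suffices to show $2^a \mid N_2$.

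For this key step I would consider
\[S = \{P \in C(\overline{\F_q}) \setminus W : \varphi^n(P) = \iota(P)\},\]
which has cardinality $2N_2$: each Case 2 point of $\P^1$ contributes both its preimages to $S$, and conversely every $P \in S$ projects to a Case 2 point. Since $\iota$ is defined over $\F_q$ it commutes with $\varphi$, so every $P \in S$ satisfies $\varphi^{2n}(P) = \iota^2(P) = P$, and hence the minimal field of definition $\F_{q^{n''}}$ of $P$ satisfies $n'' \mid 2n$. On the other hand $P \notin W$ forces $\iota(P) \neq P$, so $\varphi^n(P) \neq P$ and $n'' \nmid n$. Writing $n = 2^a m$ with $m$ odd, the divisors of $2n$ that do not divide $n$ are precisely those of the form $2^{a+1} m'$ with $m' \mid m$. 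Thus every Frobenius orbit inside $S$ has length divisible by $2^{a+1}$, and summing over orbits gives $2^{a+1} \mid |S| = 2N_2$, which is what we need.

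The second congruence $2(q^n+1) \equiv 2 \pmod{2^{a+1}}$ reduces to $2^a \mid q^n$, which is immediate since $q$ is a power of $2$ and $n = 2^a m \geq 2^a \geq a+1$ for $a \geq 1$. The crux of the argument is the orbit-length observation for $S$: the combined constraints $n'' \mid 2n$ and $n'' \nmid n$ are exactly what force the full factor $2^{a+1}$ into $n''$. Everything else — the injectivity of $\pi|_W$ and the commutation $\iota\varphi = \varphi\iota$ on geometric points — is standard, though in characteristic $2$ it is worth recalling explicitly that $W$ is cut out by the vanishing of the coefficient $h(x)$ in a defining equation $y^2 + h(x)y = f(x)$ rather than by $y = 0$, so that the bijection between $W$ and its image on $\P^1$ still holds.
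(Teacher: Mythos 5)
Your proof is correct, and it follows the same fiber-counting skeleton as the paper's: write $\#C(\F_{q^n}) = 2(q^n+1) - \#W(\F_{q^n}) - 2N_2$, where $N_2$ counts the rational points of $\P^1$ outside the branch locus with no rational preimage, and then show $2^a \mid N_2$. Where you differ is in how the key divisibility is obtained. The paper works downstairs on $\P^1$: it first uses the explicit model $y^2 + h(x)y = f(x)$ to show that any $x$ defined over $\F_{q^{n/2}}$ automatically has two rational preimages (the Artin--Schreier quadratic over $\F_{q^{n/2}}$ splits over $\F_{q^n}$), and only then invokes the divisibility of Galois orbit lengths for the remaining $x$ and for their geometric preimages. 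You work upstairs on $C$ with the single set $S = \{P \notin W : \varphi^n(P) = \iota(P)\}$ of cardinality $2N_2$, and the pair of conditions $n'' \mid 2n$, $n'' \nmid n$ on the degree of any $P \in S$ forces $2^{a+1} \mid n''$ for every Frobenius orbit at once. This packaging is slightly cleaner: the case $x \in \F_{q^{n/2}}$ needs no separate treatment (such an $x$ cannot carry a point of $S$, since its preimages would have degree dividing $n$), and the defining equation is never used, so apart from identifying $W$ with the fixed locus of $\iota$ the argument is independent of the characteristic. Both proofs conclude identically with $2q^n \equiv 0 \pmod{2^{a+1}}$.
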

\begin{proof}
    We use the Weierstrass form for $C$, that is $y^2 + h(x)y = f(x)$ for polynomials $h(x), f(x) \in \F_q[x]$. Consider the fibers of the hyperelliptic map $\pi: C(\F_{q^n}) \to \P^1(\F_{q^n})$. Any point $x \in \P^1(\F_{q^n})$ must have either 0, 1, or 2 preimages over $\F_{q^n}$. Notably, $\pi^{-1}(x)$ has one point if and only if $x$ is the image of a Weierstrass point that is defined over $\F_{q^n}$. Now, we consider $x \not\in W$. If $x \in \F_{q^{n/2}}$ then $y^2 + h(x)y - f(x) \in \F_{q^{n/2}}[y]$, so it has two roots over $\F_{q^n}$, and therefore $x$ has two preimages on $C(\F_{q^n})$. Conversely, if $x \in \P^1(\F_{q^n})$ is not defined over $\F_{q^{n/2}}$, the size of the Galois orbit of $x$ is divisible by $2^a$. Now, $x$ must have either 0 or 2 preimages in $C(\F_{q^n})$. If $x$ has 0 preimages in $C(\F_{q^n})$, $x$ must have two preimages in $C(\F_{q^{2n}})$ which belong to a Galois orbit of size divisible by $2^{a+1}$. Thus, the number of contributed points modulo $2^{a+1}$ is the same as the case in which $x$ has two preimages over $\F_{q^n}$. Therefore, to count $\#C(\F_{q^n})$ modulo $2^{a+1}$, we assume all non-Weierstrass points have 2 preimages on the curve, and all Weierstrass points have 1 preimage. Finally, note that $q^n \equiv 0 \pmod{2^a}$.
\end{proof}

\subsection{Genus 3 Obstructions}
\noindent We prove an analogue of the result in \cite{costa2020} for fields of characteristic 2. We use the following formulae, also presented in \cite[\S 4]{costa2020}. Note that the version used in \cite{costa2020} is missing the term $5s^2u$ in the fifth equation. These utilize Newton's identities and \cite[Thm. 11.1]{milne86}.

\begin{lemma} \label{lemma:point_counting}
    Letting $s$, $t$ and $u$ be the coefficients of a Weil polynomial of an isogeny class containing the Jacobian of a genus 3 hyperelliptic curve $C/\F_q$, we have:
    \begin{equation}
    \begin{split}
        \#C(\F_{q}) &= q + 1 + s\\
        \#C(\F_{q^2}) &= q^2 + 1 - s^2 + 2t\\
        \#C(\F_{q^3}) &= q^3 + 1 + s^3 - 3st + 3u \\
        \#C(\F_{q^4}) &= q^4 + 1 - s^4 + 4s^2t - 4su - 2t^2 + 4qt\\
        \#C(\F_{q^5}) &= q^5 + 1 + s^5 - 5s^3t + 5st^2 + 5s^2 u - 5qst - 5tu + 5q^2 s
    \end{split}
    \end{equation}
\end{lemma}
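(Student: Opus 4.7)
The plan is to apply the standard zeta-function / Lefschetz-type formula (Milne's Theorem 11.1) relating point counts to the Frobenius eigenvalues, combined with Newton's identities. Concretely, let $\alpha_1, \ldots, \alpha_6$ be the roots of the Weil polynomial $P(x) = x^6 + sx^5 + tx^4 + ux^3 + tqx^2 + sq^2 x + q^3$. Then
\[
\#C(\F_{q^n}) \;=\; q^n + 1 - p_n, \qquad p_n := \sum_{i=1}^{6} \alpha_i^n.
\]
This reduces the problem to expressing each power sum $p_n$ in terms of $s, t, u, q$.

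Reading off the elementary symmetric polynomials from $P(x) = \prod_i (x-\alpha_i)$ gives $e_1 = -s$, $e_2 = t$, $e_3 = -u$, $e_4 = tq$, $e_5 = -sq^2$, $e_6 = q^3$. Newton's identities then provide the recursion
\[
p_n = e_1 p_{n-1} - e_2 p_{n-2} + e_3 p_{n-3} - \cdots + (-1)^{n-1} n e_n \qquad (n \leq 6),
\]
which I would apply iteratively for $n = 1, 2, 3, 4, 5$. Each substitution is purely mechanical: $p_1 = -e_1 = -s$, then $p_2 = e_1 p_1 - 2e_2 = s^2 - 2t$, and so on up through $p_5$. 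Subtracting from $q^n+1$ and simplifying using the palindromic relations $e_{6-k} = q^{3-k} e_k \cdot (\text{sign})$ (which is already encoded in the choice of $e_k$) yields exactly the claimed five formulas.

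The only substantive thing to verify is that the $p_4$ and $p_5$ expansions, which involve cross-terms with $q$ coming from $e_4$ and $e_5$, produce the displayed $+4qt$ and $-5qst + 5q^2 s$ contributions. In particular the $5s^2 u$ term in $p_5$ arises from the $e_1 e_3 p_1$ and $e_3 p_2$ pieces of the Newton recursion; this is the term that \cite{costa2020} dropped and that we must carry to get the correct expression. No characteristic-2 issues enter here, since the identity is an identity of integer polynomials in $s, t, u, q$.

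The main (and only) obstacle is therefore bookkeeping: ensuring signs in Newton's recursion match the sign conventions forced by the symmetric coefficients of $P(x)$, and tracking the $q$-weighted terms $e_4, e_5, e_6$ that first appear at $n = 4, 5, 6$. Once the recursion is run carefully, the result is immediate.
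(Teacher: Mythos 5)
Your approach is exactly the one the paper relies on: the paper states these formulas with a citation to Newton's identities and \cite[Thm.~11.1]{milne86}, i.e.\ $\#C(\F_{q^n}) = q^n + 1 - p_n$ with the power sums computed from the signed coefficients of the Weil polynomial, which is precisely your argument. The only nit is a transcription slip where you write $p_1 = -e_1 = -s$ despite having set $e_1 = -s$ (Newton's identity gives $p_1 = e_1 = -s$), but your final values, including the $5s^2u$ term in $p_5$, are correct.
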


\begin{theorem}\label{thm:rest_mod_2}
    Let $q$ be an even prime power. Then the isogeny class with Weil polynomial 
    $$x^6 + sx^5 + tx^4 + ux^3 + tqx^2 + sq^2x + q^3$$
    cannot contain a Jacobian of a hyperelliptic curve in the following cases:
    \begin{enumerate}
        \item $(s, t, u) = (0, 1, 1) \pmod{2}$
        \item $(s, t, u) = (1, 0, 1) \pmod{2}$
    \end{enumerate}
\end{theorem}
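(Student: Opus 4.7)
The strategy is to adapt the approach of \cite{costa2020}: assume for contradiction that the isogeny class contains the Jacobian of a hyperelliptic curve $C/\F_q$, and extract congruence conditions on the Weierstrass orbit structure by combining Proposition~\ref{prop:costa47} (applied at $n=4$) and Lemma~\ref{lemma:costa42} (applied at $n=3$) with the point-count formulae of Lemma~\ref{lemma:point_counting}. Writing $n_j$ for the number of Frobenius orbits of size $j$ on the Weierstrass locus $W$, Lemma~\ref{lemma:costa41} gives $\#W(\F_{q^n}) = \sum_{j \mid n} j \cdot n_j$. The crucial global constraint is
\[ \#W \;=\; \sum_{j} j \cdot n_j \;=\; r + 1 \;\leq\; g + 1 \;=\; 4, \]
valid since the $2$-rank of a genus $g$ curve is at most $g$; this small budget is what drives every contradiction.

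For case (2), with $(s,t,u) \equiv (1,0,1) \pmod 2$, I would apply Proposition~\ref{prop:costa47} only at $n = 4$. The reduction of the formula for $\#C(\F_{q^4})$ from Lemma~\ref{lemma:point_counting} modulo $8$ is clean: $q$ even kills the $q^4$ and $4qt$ terms, $s$ odd gives $s^4 \equiv 1 \pmod{16}$, $t$ even kills $4s^2t$ and $-2t^2$, and $su$ odd gives $-4su \equiv 4$, yielding $\#C(\F_{q^4}) \equiv 4 \pmod 8$. Thus $n_1 + 2n_2 + 4n_4 = \#W(\F_{q^4}) \equiv 6 \pmod 8$, which is incompatible with the bound $n_1 + 2n_2 + 4n_4 \leq 4$.

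For case (1), with $(s,t,u) \equiv (0,1,1) \pmod 2$, the same calculation at $n = 4$ yields $\#C(\F_{q^4}) \equiv 7 \pmod 8$, hence $n_1 + 2n_2 + 4n_4 \equiv 3 \pmod 8$; since this quantity is at most $4$, it must equal exactly $3$, forcing $n_4 = 0$ and $n_1$ odd. To finish I would apply Lemma~\ref{lemma:costa42} at $n = 3$: the expression for $\#C(\F_{q^3})$ reduces to $0$ modulo $2$, so $\#W(\F_{q^3}) = n_1 + 3n_3 \equiv 0 \pmod 2$, which combined with $n_1$ odd forces $n_3$ odd, so $n_3 \geq 1$. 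Then $\#W \geq (n_1 + 2n_2) + 3n_3 \geq 3 + 3 = 6$, contradicting $\#W \leq 4$.

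The main obstacle is really a matter of picking the right tests: case (2) succumbs to a single application at $n = 4$, but for case (1) the mod $8$ constraint is numerically consistent with a valid orbit structure, so one must pull in the mod $2$ information from the odd extension $n = 3$ to push the orbit count over its budget. The individual modular calculations are routine, so the remaining work is essentially in choosing the right values of $n$ and tracking which $n_j$ the resulting congruences reveal.
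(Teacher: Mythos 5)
Your proof is correct, but it takes a genuinely different route from the one in the paper. The paper works entirely modulo $2$: it computes $\#C(\F_{q})$, $\#C(\F_{q^3})$, and $\#C(\F_{q^5})$ mod $2$ from Lemma~\ref{lemma:point_counting}, uses Lemmas~\ref{lemma:costa41} and~\ref{lemma:costa42} to translate these into parities of the orbit counts, and in both cases forces the existence of a Frobenius orbit of size $5$ on $W$, contradicting $\sum d_i = \#W = 4$ (the $2$-rank is exactly $3$ since $u$ is odd). You instead invoke Proposition~\ref{prop:costa47} at $n=4$ to extract mod-$8$ information: in case (2) the single congruence $\#W(\F_{q^4}) \equiv 6 \pmod 8$ already collides with $\#W(\F_{q^4}) \le 4$, and in case (1) the congruence pins down $n_1 + 2n_2 + 4n_4 = 3$, which together with the mod-$2$ count at $n=3$ forces $n_3 \ge 1$ and hence $\#W \ge 6$. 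Both arguments live in the same framework (point counts constraining the Weierstrass orbit partition against the budget $\#W \le g+1$), and your mod-$8$ computations check out ($-s^4 \equiv -1$, $-4su \equiv 4$ for case (2); $-2t^2 \equiv -2$ for case (1), all other terms vanishing mod $8$). Your case (2) is arguably more economical — one congruence at one extension — whereas the paper's argument is uniform across the two cases, never needs $\F_{q^4}$ or Proposition~\ref{prop:costa47}, and only requires parity arithmetic; your case (1), by contrast, needs two extensions just as the paper's does, merely different ones ($n=3,4$ versus $n=1,3,5$). Note also that you only use the weaker bound $\#W \le g+1$ rather than the exact value $\#W = 4$, which is fine here but worth flagging as the place where the $2$-rank enters.
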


\begin{proof}
    \textbf{Case 1}: Using Lemma~\ref{lemma:point_counting}, obtain the following point counts:
    \[
        \#C(\F_{q}) \equiv 1, 
        \#C(\F_{q^3}) \equiv 0, 
        \#C(\F_{q^5}) \equiv 0 \pmod{2}
    \]
    By Lemmas~\ref{lemma:costa41}, \ref{lemma:costa42}, the first congruence implies the existence of an odd number of $d_i = 1$. But then the third congruence implies that there exists a $d_i = 5$. From the parity of the coefficients, we know the 2-rank is 3, so $\sum d_i = 4$, leading to a contradiction.
    
    \textbf{Case 2}: Using Lemma~\ref{lemma:point_counting}, obtain the following point counts:
    \[
        \#C(\F_{q}) \equiv 0, 
        \#C(\F_{q^3}) \equiv 1, 
        \#C(\F_{q^5}) \equiv 1 \pmod{2}
    \]
    By Lemmas~\ref{lemma:costa41}, \ref{lemma:costa42}, the first congruence implies the existence of an even number of $d_i = 1$; thus the third congruence implies the existence of at least one $d_i = 5$. We once again have a contradiction to $\sum d_i = 4$ (note that the 2-rank must once more be 3). 
    
\end{proof}

\begin{conjecture}\label{conj:rest_mod_2}
    Let $q$ be an even prime power. Then the isogeny class with Weil polynomial 
    $$x^6 + sx^5 + tx^4 + ux^3 + tqx^2 + sq^2x + q^3$$
    does not contain a Jacobian of a hyperelliptic curve if $(s, t, u) = (1, 1, 0) \pmod{2}$.
\end{conjecture}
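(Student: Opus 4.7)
The plan is to extend the approach of Theorem~\ref{thm:rest_mod_2} by first narrowing the Weierstrass-point orbit structure and then invoking the Artin-Schreier genus formula for the final contradiction. A Newton-polygon analysis at $v_2$ (with $q = 2^n$) shows that $(s, t, u) \equiv (1, 1, 0) \pmod{2}$ forces the $2$-rank to equal $2$: the coefficients of $x^0, x^1, x^2$ (namely $q^3, sq^2, tq$) have $v_2 = 3n, 2n, n$, the coefficient $u$ of $x^3$ has $v_2 \geq 1$, and the coefficients of $x^4, x^5, x^6$ (namely $t, s, 1$) have $v_2 = 0$, yielding a horizontal slope-$0$ segment of length $2$. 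Hence $\#W = r + 1 = 3$ and $\sum d_i = 3$.

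Next, one computes $\#C(\F_{q^n}) \pmod{2}$ for $n \in \{1, 3, 5\}$ using Lemma~\ref{lemma:point_counting}; these reduce to $0, 1, 0$ respectively. Applying Lemmas~\ref{lemma:costa41} and~\ref{lemma:costa42} imposes that the number of orbits of size $1$ is even, of size $3$ is odd, and of size $5$ is even. Combined with $\sum d_i = 3$, the unique consistent partition is $(d_1) = (3)$: a single Galois orbit of size $3$. Unlike the two cases of Theorem~\ref{thm:rest_mod_2}, this possibility is not eliminated by orbit-sum parity alone.

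The final contradiction should come from the Artin-Schreier structure of $\pi: C \to \P^1$. Since the three branch points form a Galois orbit of size $3$, none is $\F_q$-rational, and so we may place $\infty \in \P^1$ at any $\F_q$-rational point to obtain a model $y^2 + h(x) y = f(x)$ with $h$ an irreducible cubic over $\F_q$ and $\infty$ unramified; the latter forces $\deg f \leq 2\deg h = 6$ after normalization. Substituting $y = h(x) Y$ puts the cover in normal Artin-Schreier form $Y^2 + Y = \phi$ with $\phi = f/h^2$; at each conjugate root $\alpha$ of $h$, smoothness of $C$ requires $v_\alpha(f) \in \{0, 1\}$, so $\phi$ has pole of order $2 - v_\alpha(f) \in \{1, 2\}$, which reduces modulo $\wp(\bar{\F}_q(x))$ to exactly $1$. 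The Artin-Schreier genus formula
\[g_C = \frac{1}{2}\left(\sum_i (d_i + 1) - 2\right)\]
applied with three ramification points of reduced conductor $1$ yields $g_C = 2 \neq 3$; equivalently, Galois equivariance forces a common odd reduced conductor $d$, and $(3d + 1)/2 = 3$ has no integer solution.

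The main obstacle will be grounding the Artin-Schreier machinery --- the reduction of even pole orders modulo $\wp$, the conductor-level genus formula, and the smoothness analysis at roots of $h$ --- self-contained within the paper's framework. This step goes beyond the elementary mod-$2$ point-counting arguments that suffice for Theorem~\ref{thm:rest_mod_2}, which may explain why this case has been left as a conjecture.
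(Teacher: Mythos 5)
First, note that the paper does not prove this statement: it is left as Conjecture~\ref{conj:rest_mod_2}, verified computationally over $\F_2,\F_4,\F_8,\F_{16}$, with the explicit remark that the point-counting method of Theorem~\ref{thm:rest_mod_2} does not apply. So there is no proof in the paper to compare against, and your attempt goes genuinely beyond it. Your reduction is correct: the residue pattern $(1,1,0)$ gives $2$-rank $2$, hence $\#W=3$ and $\sum d_i=3$, and the parities $\#C(\F_q)\equiv 0$, $\#C(\F_{q^3})\equiv 1$, $\#C(\F_{q^5})\equiv 0$ eliminate the partitions $\{1,1,1\}$ and $\{1,2\}$ via Lemmas~\ref{lemma:costa41} and~\ref{lemma:costa42} but not $\{3\}$, which is exactly where the authors' method stalls. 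Your new ingredient --- that on a single Frobenius orbit of size $3$ the Artin--Schreier ramification breaks $d_P$ must all be equal, while the characteristic-$2$ Riemann--Hurwitz count $\sum_{P\in W}(d_P+1)=2g+2=8$ over three points with each $d_P$ odd forces the break multiset to be $\{1,1,3\}$ --- appears to be sound. Put more intrinsically: the different of $\pi$ is an $\F_q$-rational divisor, so its multiplicities $d_P+1$ are constant on Galois orbits, and $3(d+1)=8$ has no integer solution. Written up carefully with a reference for the conductor-level Riemann--Hurwitz formula and the coprimality of the break to $p$, this would upgrade the conjecture to a theorem.

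That said, the middle portion of your final step is broken and should be discarded in favor of the equivariance argument you state at the end. A model $y^2+h(x)y=f(x)$ with $h$ an irreducible (hence separable) cubic and $\deg f\le 6$ has weighted degree at most $6<2g+1=7$ and cannot present a genus-$3$ curve; and your conclusion there that every reduced conductor equals $1$ would give $\sum_{P}(d_P+1)=6$, i.e.\ genus $2$, which merely shows that the assumed normalization does not exist (a branch point with break $3$ forces $h$ to vanish to order $2$ there, since the multiplicity of $h$ at $P$ is $(d_P+1)/2$). None of that machinery is needed: the contradiction between the Galois-forced equality of the breaks and the forced multiset $\{1,1,3\}$ (equivalently, $3d=5$) stands on its own, and is the part worth keeping.
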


We confirmed this conjecture for all Weil polynomials of isogeny classes of abelian varieties over $\F_2, \F_4, \F_8$ and $\F_{16}$. As it does not violate any of the point counting arguments above, the same method of proof does not apply. 

\subsection{Genus 4 Obstructions}

We begin with an analogue of Lemma~\ref{lemma:point_counting}:

\begin{lemma} \label{lemma:newton4}
    Letting $s,t,u,$ and $v$ be the coefficients of the Weil polynomial of the isogeny class containing the Jacobian of the genus 4 hyperelliptic curve $C/\F_q$, we have:
    \begin{enumerate}
        \item $\#C(\F_{q})   = q + 1 + s$
        \item $ \#C(\F_{q^2}) = q^2 + 1 - s^2 + 2t$
        \item $\#C(\F_{q^3}) = q^3 + 1 + s^3 - 3st + 3u $
        \item $ \#C(\F_{q^4}) = q^4 + 1 - s^4 + 4s^2t - 4su - 2t^2 + 4v$
        \item $\#C(\F_{q^5}) = q^5 + 1 + s^5 - 5s^3t + 5s^2 u + 5st^2 - 5sv - 5tu + 5uq$
        \item $ \#C(\F_{q^6}) = q^6 + 1 - s^6 + 6s^4t - 6s^3u - 9s^2t^2 + 6s^2v + 12stu - 6suq+ 2t^3 - 6tv + 6tq^2 - 3u^2$
        \item $ \#C(\F_{q^7}) = q^7 + 1 + s^7 - 7s^5 t + 7s^4 u + 14s^3 t^2 - 7s^3 v - 21s^2 tu  + 7s^2 u q - 7st^3 + 14stv - 7st q^2 
                       + 7su^2 + 7s q^3 + 7t^2 u - 7tu q - 7uv$
    \end{enumerate}
        
\end{lemma}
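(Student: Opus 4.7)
The plan is to apply the Lefschetz trace formula combined with Newton's identities, generalizing the derivation of Lemma~\ref{lemma:point_counting} (from \cite[\S 4]{costa2020}) from genus 3 to genus 4. Let $\alpha_1, \dots, \alpha_8$ denote the Frobenius eigenvalues, i.e., the roots of
\[
P(x) = x^8 + sx^7 + tx^6 + ux^5 + vx^4 + uqx^3 + tq^2 x^2 + sq^3 x + q^4.
\]
By \cite[Thm 11.1]{milne86}, $\#C(\F_{q^n}) = q^n + 1 - p_n$, where $p_n = \sum_{i=1}^{8} \alpha_i^n$ is the $n$-th power sum of the Frobenius eigenvalues, so it suffices to compute $p_1, \dots, p_7$ as explicit polynomials in $s, t, u, v$ and $q$.

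First I would read off the elementary symmetric functions $e_k = e_k(\alpha_1, \dots, \alpha_8)$ directly from the coefficients of $P(x)$: explicitly, $e_1 = -s,\ e_2 = t,\ e_3 = -u,\ e_4 = v,\ e_5 = -uq,\ e_6 = tq^2,\ e_7 = -sq^3,\ e_8 = q^4$. Newton's identities then yield, for $1 \le k \le 2g = 8$, the recursion
\[
p_k = e_1 p_{k-1} - e_2 p_{k-2} + \cdots + (-1)^{k-1} k\,e_k,
\]
so that starting from $p_1 = e_1 = -s$ one may iteratively compute $p_2, \dots, p_7$. Substituting into $\#C(\F_{q^n}) = q^n + 1 - p_n$ for $n = 1, \dots, 7$ produces the seven claimed formulas.

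The main obstacle is bookkeeping rather than conceptual: $p_1$ through $p_4$ reproduce (mutatis mutandis) the genus 3 formulas already verified in Lemma~\ref{lemma:point_counting}, but beginning with $p_5$ powers of $q$ start entering the non-leading terms through $e_5, e_6, e_7$, and the expressions for $p_6$ and especially $p_7$ acquire many cross terms that must be collected carefully (e.g.\ the $-7uv$ term in the $n=7$ formula arises solely from the contribution of $e_7$ paired with terms from $p_0 = 8$, while $7sq^3$ comes from the $(-1)^{6} \cdot 7 e_7$ term). No new input beyond Newton's identities and the functional-equation shape of the Weil polynomial is required, and the entire calculation can be verified by a short symbolic computation.
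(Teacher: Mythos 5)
Your derivation---reading $e_1,\dots,e_8$ off the Weil polynomial, running Newton's identities, and using $\#C(\F_{q^n}) = q^n + 1 - p_n$ via \cite[Thm.~11.1]{milne86}---is exactly the paper's route (the same Newton's-identities computation cited for Lemma~\ref{lemma:point_counting} and spelled out in the general-genus procedure of Section~\ref{sec:proof_alg}), and carrying it out does reproduce all seven stated formulas. One illustrative aside is misattributed---the $-7uv$ term for $n=7$ arises from the $e_3p_4$ and $e_4p_3$ terms of the recursion (and $p_0$ never enters Newton's identities for $k \le 8$), while $e_7 = -sq^3$ only supplies the $7sq^3$ term---but this bookkeeping slip does not affect the validity of the argument.
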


Using this lemma, we can now prove an analogue of Theorem~\ref{thm:rest_mod_2} for genus 4:

\begin{theorem} \label{thm:rest_genus4}
    Let $f(x) = x^8 + sx^7 + tx^6 + ux^5 + vx^4 + uqx^3 + tq^2x^2 + sq^3x + q^4$ be the Weil polynomial of an isogeny class $\mathcal{A}$ of 4-dimensional abelian varieties over a finite field $\F_q$ of characteristic 2. If the coefficients $(s, t, u, v)$ are equivalent to any of the following modulo 2, $\mathcal{A}$ does not contain a hyperelliptic Jacobian:
    \begin{multicols}{6}
        \begin{enumerate}[leftmargin=*, itemsep=0.25em]
        \item[(I)] $(0,0,1,1)$
        \item[(II)] $(0,1,0,1)$
        \item[(III)] $(1,1,0,1)$
        \item[(IV)] $(0,1,1,1)$
        \item[(V)] $(1,0,0,1)$
        \item[(VI)] $(0,1,1,0)$
        \item[(VII)] $(1,0,1,0)$        
        \end{enumerate}
    \end{multicols}
    
\end{theorem}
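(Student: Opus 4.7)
My plan is to mirror the proof of Theorem~\ref{thm:rest_mod_2}: use Lemma~\ref{lemma:newton4} to compute $\#C(\F_{q^n}) \pmod 2$ in each of the seven parity classes, translate via Lemmas~\ref{lemma:costa41} and~\ref{lemma:costa42} into parity constraints on the multiplicities $N_k := \#\{i : d_i = k\}$ of Weierstrass orbit sizes, and compare against $\sum d_i = r + 1$, where the 2-rank $r$ equals the largest $i \in \{1,2,3,4\}$ such that $a_i$ is odd (so $r = 4$ in Cases~(I)--(V) and $r = 3$ in Cases~(VI)--(VII), by Deuring--Shafarevich).

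In each case I first reduce the seven formulas of Lemma~\ref{lemma:newton4} modulo~2, using only that $q$ is even. For Cases~(I), (III), (IV), (V), (VI), (VII), I expect this reduction to produce odd residues for $\#C(\F_{q^n}) \pmod 2$ at enough prime values $n \in \{3,5,7\}$ to force at least one orbit of each of several distinct prime sizes to exist; the resulting lower bound on $\sum d_i$ will then exceed $r+1$. For instance, in Case~(I) with $(s,t,u,v) \equiv (0,0,1,1)$, the residues of $\#W(\F_{q^n}) \pmod 2$ for $n \in \{1,3,7\}$ should force at least one orbit of each of sizes $1$, $3$, and $7$, summing to $11 > 5$. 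An analogous bookkeeping, with orbit sizes drawn from $\{1,3,5,7\}$, should close out the remaining five cases.

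The hard part will be Case~(II), with $(s,t,u,v) \equiv (0,1,0,1) \pmod 2$, in which a direct calculation shows $\#C(\F_{q^n}) \equiv 1 \pmod 2$ for every $n \in \{1,\dots,7\}$, and the mod-$2$ constraints are consistent with admissible orbit configurations such as $\{1,2,2\}$, $\{1,1,1,2\}$, or $\{1,4\}$. To handle this case I invoke Proposition~\ref{prop:costa47} and work modulo higher powers of~2. Writing $s = 2S$, $t = 2T+1$, $u = 2U$, $v = 2V+1$, almost every monomial in the expressions for $\#C(\F_{q^2})$ and $\#C(\F_{q^6})$ vanishes modulo~4 because it contains a factor of $s^2$, $u^2$, $su$, or $q^2$, leaving only small explicit residues; I expect $\#C(\F_{q^2}) \equiv 3$ and $\#C(\F_{q^6}) \equiv 1 \pmod 4$. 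Proposition~\ref{prop:costa47} then translates these into $\#W(\F_{q^2}) \equiv 3$ and $\#W(\F_{q^6}) \equiv 1 \pmod 4$. Since $\sum d_i = 5$ forbids any orbit of length~6, and the mod-2 analysis at $n=3$ already forces $N_3$ even and hence $N_3 = 0$, both sides collapse to $N_1 + 2N_2 \pmod 4$, yielding two incompatible congruences.

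The principal tedium is the bookkeeping in Case~(II), where I must carefully track the mod-4 contributions of $2t$ in $\#C(\F_{q^2})$ and of $2t^3$ and $-6tv$ in $\#C(\F_{q^6})$, since these are the only monomials in Lemma~\ref{lemma:newton4} that survive reduction. The other six cases reduce to routine parity chases directly analogous to the two cases of Theorem~\ref{thm:rest_mod_2}, made only more voluminous by the need to evaluate Lemma~\ref{lemma:newton4} up through $n = 7$.
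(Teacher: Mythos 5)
Your proposal is correct and, for Cases (I) and (III)--(VII), follows the paper's proof essentially verbatim: reduce Lemma~\ref{lemma:newton4} modulo 2, convert the point counts into parity constraints on the orbit multiplicities $N_k$ via Lemmas~\ref{lemma:costa41} and~\ref{lemma:costa42}, and contradict $\sum d_i = r+1$ with the $2$-rank $r$ read off from the largest odd coefficient. (The paper dispatches Case (I) using $n \in \{1,3\}$ alone; your additional use of $n=7$ is harmless, as both force an oversized orbit.) The only genuine divergence is Case (II), $(s,t,u,v) \equiv (0,1,0,1) \pmod 2$, which you correctly identify as the case where mod-2 information is insufficient. The paper's route is a single computation at $n=4$: one finds $\#C(\F_{q^4}) \equiv 3 \pmod 8$, whence Proposition~\ref{prop:costa47} gives $\#W(\F_{q^4}) \equiv 7 \pmod 8$, impossible since $\#W(\F_{q^4}) \leq 5$. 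Your route instead works modulo 4 at $n=2$ and $n=6$, obtaining $N_1 + 2N_2 \equiv 3 \pmod 4$ and $N_1 + 2N_2 \equiv 1 \pmod 4$ respectively (after eliminating $N_3 = N_6 = 0$); this is also valid, and in fact your $n=2$ congruence already suffices on its own, since $N_1 + 2N_2 = 3$ forces a residual mass of $2$ in $\sum d_i = 5$ that cannot be covered by orbits of size at least $3$. Both arguments lean on Proposition~\ref{prop:costa47}; the paper's is slightly shorter because it exploits the absolute bound $\#W(\F_{q^4}) \leq 5$ in place of a second congruence.
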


\begin{proof}
    \textbf{Case 1:} Using Lemma~\ref{lemma:newton4}, obtain the following point counts:
    \[
        \#C(\F_q) \equiv 1, \#C(\F_{q^3}) \equiv 0 \pmod{2}.
    \]
    Since the $d\equiv1$, the $2$-rank is 4, so we have $\sum d_i$ = 5. By Lemma~\ref{lemma:costa41}, the first congruence implies that there are an odd number of $d_i=1$, and the second congruence thus forces the existence of a $d_i=3$, a contradiction.

    \noindent \textbf{Case 2:} We obtain the following point count:
    \[
        \#C(\F_{q^4}) \equiv 3 \pmod{8}.
    \]
    By Proposition~\ref{prop:costa47}, this implies that $\#W(\F_{q^4}) \equiv 7 \pmod{8}$, a contradiction since $\#W(\F_{q^4}) \leq 5$.
    
    \textbf{Case 3:}  We obtain the following point counts modulo 2:
    \[
        \#C(\F_q)\equiv0, \#C(\F_{q^3}) \equiv 1, \#C(\F_{q^5}) \equiv 1.
    \]
     We have $\sum d_i$ = 5 by the $2$-rank. The first congruence implies an even number of $d_i = 1$, and the second and third congruences imply the existence of both a $d_i=3$ and a $d_i=5$, a contradiction.

    \textbf{Case 4:} This is proven by an identical argument to case 1.

    \textbf{Case 5:} We obtain the following point counts:
    \[
        \#C(\F_q) \equiv 0, \#C(\F_{q^7}) \equiv 1 \pmod{2}.
    \]
    Since $\sum d_i$ = 5 by the $2$-rank, together these congruences imply the existence of a $d_i=7$, a contradiction.

    \textbf{Case 6:} We obtain the following point counts:
    \[
        \#C(\F_q) \equiv 1, \#C(\F_{q^5}) \equiv 0 \pmod{2}.
    \]
    Together these congruences imply the existence of a $d_i=5$, however the $2$-rank is 3, so we have $\sum d_i = 4$, a contradiction.

    \textbf{Case 7:} We obtain the following point counts:
    \[
        \#C(\F_q) \equiv 0, \#C(\F_{q^5}) \equiv 1 \pmod{2}.
    \]
    Together these congruences imply the existence of a $d_i=5$, however the $2$-rank is 3, so we have $\sum d_i = 4$, a contradiction.
    
\end{proof}

\begin{conjecture} \label{conj:g4_rest}
    As in Theorem~\ref{thm:rest_genus4}, if the Weil coefficients $(s, t, u, v)$ associated to $\mathcal{A}$ are equivalent to $(1,1,0,0) \pmod{2}$, $\mathcal{A}$ does not contain a hyperelliptic Jacobian.
\end{conjecture}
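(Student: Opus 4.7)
The plan is to demonstrate that the standard parity arguments from Theorem~\ref{thm:rest_genus4} fail to produce a contradiction for this case, and then to sketch the structural ingredients a full proof would require.

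First, reducing the Weil polynomial modulo 2 gives $\bar f(x) \equiv x^6(x^2+x+1) \pmod 2$, so the 2-rank is $r = 2$ and $\sum d_i = 3$. Applying Newton's identities with $e_1 \equiv e_2 \equiv 1 \pmod 2$ and $e_i \equiv 0 \pmod 2$ for $i \ge 3$ yields the recursion $p_n \equiv p_{n-1} + p_{n-2} \pmod 2$ for $n \ge 3$, with $p_1 \equiv p_2 \equiv 1 \pmod 2$, producing the period-3 pattern $p_n \pmod 2 = 1,1,0,1,1,0,\ldots$. Hence $\#C(\F_{q^n}) \equiv [3 \mid n] \pmod 2$, which by Lemmas~\ref{lemma:costa41} and~\ref{lemma:costa42} combined with $\sum d_i = 3$ forces the unique orbit partition $\{d_i\} = \{3\}$: the three geometric Weierstrass points form a single Galois orbit.

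Next I would attempt to eliminate this partition via Proposition~\ref{prop:costa47}. A direct Newton-recursion computation of $p_n$ for $n = 2,4,6,8,12$ reveals that $\#C(\F_{q^n}) \pmod{2^{a+1}}$ matches $2 - \#W(\F_{q^n}) \pmod{2^{a+1}}$ with $\#W(\F_{q^n}) = 3 \cdot [3 \mid n]$; no contradiction arises. An induction using $v_2(e_i) \ge \lceil (i-2)/2 \rceil \cdot n$ for $i \ge 3$ over $\F_{2^n}$ strongly suggests this consistency persists for all $n$. This would explain why the authors leave this case as a conjecture: the parity-propagation machinery cannot distinguish the pattern $(1,1,0,0) \pmod 2$ from a hyperelliptic Jacobian with three Galois-conjugate Weierstrass points.

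A proof must therefore exploit the forced geometric structure. Under the partition $\{3\}$, the curve admits a Weierstrass model $y^2 + h(x)y = f(x)$ over $\F_q$ in which $h$ is proportional to an irreducible cubic over $\F_q$ and the ramification at infinity is split. Two plausible routes are: (i) parameterize this moduli space, enumerate the finite-dimensional family of such pairs $(h,f)$ up to isomorphism, and verify that the parity pattern $(1,1,0,0) \pmod 2$ is never realized by their Weil polynomials; or (ii) invoke a Newton polygon constraint --- in the spirit of Scholten--Zhu, a hyperelliptic Jacobian in characteristic 2 of 2-rank $r$ has Newton polygon $0^r,(1/2)^{2(g-r)},1^r$, yielding divisibilities like $v_2(v) \ge n$ and $v_2(u) \ge \lceil n/2 \rceil$ on the Weil coefficients --- and then complete a residual case analysis. \emph{The main obstacle} is that both constraints weaken dramatically at small $q$, becoming essentially vacuous at $q = 2$, yet the conjecture holds in every verified case; any uniform argument will likely require combining multiple structural inputs, or identifying a new invariant of hyperelliptic Jacobians in characteristic 2 that is sensitive to the $(1,1,0,0)$ pattern but invisible to point counting alone.
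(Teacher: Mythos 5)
This statement is left as a conjecture in the paper: the authors give no proof, verify it only computationally, and explicitly remark (after the general-genus procedure in Section~\ref{sec:proof_alg}) that their point-counting machinery ``is insufficient to produce Conjectures~\ref{conj:rest_mod_2} and~\ref{conj:g4_rest}.'' Your diagnostic portion is correct and independently confirms that remark: with $(s,t,u,v)\equiv(1,1,0,0)$ the reduction $x^6(x^2+x+1)$ gives $2$-rank $2$, hence $\sum d_i=3$; the Newton recursion does give $p_n\equiv 1,1,0$ periodically, so $\#C(\F_{q^n})\equiv 1 \pmod 2$ exactly when $3\mid n$, and the partition $\{3\}$ satisfies every congruence from Lemmas~\ref{lemma:costa41} and~\ref{lemma:costa42} as well as the $2$-adic refinement of Proposition~\ref{prop:costa47} (e.g.\ $\#C(\F_{q^2})\equiv 2\equiv 2-\#W(\F_{q^2})\pmod 4$). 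This is a sound explanation of \emph{why} the case resists the method of Theorems~\ref{thm:rest_mod_2} and~\ref{thm:rest_genus4}.

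However, as a proof attempt there is a genuine gap: nothing after the diagnosis is an argument. Route (i) is a proposal to do a computation that has not been done (and an ``enumerate all $(h,f)$'' argument cannot be finite uniformly in $q$ without a further idea), and route (ii) rests on a Newton polygon claim that is not correct as stated --- an abelian fourfold of $2$-rank $2$ need not have Newton polygon $0^2,(1/2)^4,1^2$ (slopes $1/3,2/3$ are possible), and in any case the $2$-rank here is already pinned down by the parity of the coefficients, so no new divisibility on $u,v$ follows. So the proposal establishes only what the paper already states, namely that the conjecture is out of reach of the parity/point-counting obstructions; the conjecture itself remains unproven in both your write-up and the paper.
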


\subsection{General Genus} \label{sec:proof_alg}
We now describe a method of proof that can be used to computationally generate a list of conditions which disprove the existence of a genus $g$ hyperelliptic Jacobian in an isogeny class of $g$-dimensional abelian varieties over a finite field of characteristic 2. Let $f(x) = x^{2g} + a_1x^{2g-1} + \ldots + a_gx^g + \ldots + q^g$ be the Weil polynomial of an isogeny class of $g$-dimensional abelian varieties over $\F_q$, and let $\alpha_1,\ldots,\alpha_{2g}$ be the roots over $\mathbb{C}$. For each possible pattern of residues mod 2 in the set $\{0,1\}^g$, say $(n_1,\ldots,n_g)$, compute the $2$-rank $r$ to be the largest index $i$ such that $n_i=1$. Accordingly, the number of Weierstrass points is $\#W=r+1$. The objective is to compute point counts $\#C(\F_{q^\ell})$ for primes $\ell$, and create a contradiction similar to in the proof of Theorem~\ref{thm:rest_mod_2}. This is done recursively as follows. For a polynomial $f$ with a residue pattern $(n_1, \ldots, n_g)$:
\begin{enumerate}
    \item[(I)] Compute the elementary symmetric sums $e_i$ in the Frobenius eigenvalues by reading off the signed coefficients of $f(x)$, obtaining \[e_0=1,e_1 = -a_1, \ldots, e_g=(-1)^ga_g, e_{g+1}=(-1)^{g+1}a_{g-1}q,\ldots,e_{2g}=q^g.\] 
    \item[(II)] Use Newton's identities to recursively compute the power sums $p_k(\alpha_1,\ldots,\alpha_{2g}) = \sum\alpha_i^k$. Explicitly \[p_k = \begin{cases}(-1)^{k-1}ke_k + \sum_{i=1}^{k-1} (-1)^{k-1-i}e_{k-i}p_i\text{ if }k \leq 2g\\
    \sum_{i=k-n}^{k-1}(-1)^{k-1+i}e_{k-i}p_i \text{ if } k > 2g.\end{cases}\]
    \item[(III)] For each $p_k$, we have $\#C(\F_{q^k}) = 1 + q^k - p_k$. Iterate through all possible partitions $\{d_1, d_2, \ldots, d_n\}$ and check if $\sum_{i : d_i \mid k} d_i \equiv \#C(\F_{q^k})\pmod{2}$ for all $k$. If there does not exist a partition satisfying this equivalence for all $k$, we have a contradiction by Lemmas~\ref{lemma:costa41} and \ref{lemma:costa42}. Therefore, we conclude that the residue class $(n_1, \ldots, n_g)$ is an obstruction. 
\end{enumerate}

\begin{remark}
    In step (III) of the procedure above, we may also check conditions of the form  $\#C(\F_{q^k}) \equiv 2 - \sum_{i : d_i \mid k} d_i \pmod{2^{a+1}}$ where $a$ is the 2-adic valuation of $k$, which can provide a contradiction by Proposition~\ref{prop:costa47} (i.e. case II of Theorem~\ref{thm:rest_genus4}). However, it may not be true in general that $\#C(\F_{q^k})$ can be uniquely determined mod $2^{a+1}$ based on the coefficients $(s, t, u, v) \pmod{2}$. Thus, we do not include this condition in the above procedure. 
\end{remark}

\begin{proposition}
    If $(n_1,\ldots,n_g)$ is a modulo 2 obstruction proven by the above procedure and $f(x) = x^{2g} + a_1x^{2g-1} + \ldots + a_gx^g + \ldots + q^g$ is the Weil polynomial of an isogeny class $\mathcal{A}$ satisfying $(a_1,\ldots,a_g) \equiv (n_1,\ldots,n_g) \pmod{2}$, $\mathcal{A}$ cannot contain a hyperelliptic Jacobian.
\end{proposition}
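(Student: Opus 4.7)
The plan is to argue by contradiction: assume $\mathcal{A}$ contains the Jacobian of a hyperelliptic curve $C/\F_q$ with Weil polynomial $f$ whose coefficients satisfy $(a_1,\ldots,a_g) \equiv (n_1,\ldots,n_g) \pmod 2$, and derive a contradiction from the hypothesis that the procedure in Section~\ref{sec:proof_alg} finds no partition of $r+1$ compatible with all the parity constraints. The whole argument amounts to checking that each computational step of the procedure correctly encodes a necessary condition forced on any such $C$.

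First I would justify steps (I) and (II). The elementary symmetric functions $e_i$ in the Frobenius eigenvalues are exactly the signed coefficients of $f(x)$, so their reductions mod 2 are determined by $(n_1,\ldots,n_g)$ together with the fact that $q$ is even, which forces $e_{g+j} \equiv 0 \pmod 2$ for $j \geq 1$. Newton's identities are integer identities, so the recursion in step (II) produces the power sums $p_k$ correctly, and hence $p_k \pmod 2$ depends only on $(n_1,\ldots,n_g)$. By the Weil conjectures $\#C(\F_{q^k}) = 1 + q^k - p_k$, and since $q$ is even this yields $\#C(\F_{q^k}) \equiv 1 + p_k \pmod 2$, justifying the parity computations in step (III).

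The main subtle point is confirming that the 2-rank $r$ of $\mathrm{Jac}(C)$ really equals the largest index $i$ with $n_i = 1$ (taking $r = 0$ if all $n_i = 0$). I would prove this via the 2-adic Newton polygon of $f$: by Manin's theorem the 2-rank equals the number of unit roots of $f$, i.e.\ the length of the initial slope-0 segment of the Newton polygon. The relevant lattice points are $(i,\, v_2(a_i))$ for $0 \leq i \leq g$ and $(g+j,\, j\, v_2(q) + v_2(a_{g-j}))$ for $1 \leq j \leq g$. Since $v_2(q) \geq 1$, every point with $i > g$ lies strictly above the $x$-axis, so the initial horizontal segment of the polygon extends exactly to the largest $i \leq g$ with $a_i$ odd. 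Thus $r$ depends only on the parities $(n_1,\ldots,n_g)$, and $\#W = r+1$ by the Deuring--Shafarevich formula cited in the preliminaries.

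Combining these ingredients, if $\mathrm{Jac}(C) \in \mathcal{A}$ then the Frobenius orbits on $W$ yield a partition $\{d_j\}$ of $r+1$, and Lemmas~\ref{lemma:costa41} and~\ref{lemma:costa42} force $\sum_{j : d_j \mid k} d_j \equiv \#C(\F_{q^k}) \pmod 2$ for every $k \geq 1$. By hypothesis the procedure verified that no partition of $r+1$ simultaneously satisfies these congruences for the checked values of $k$, producing the desired contradiction. The hardest step is the Newton-polygon identification of the 2-rank; beyond that, the argument is a direct unwinding of the procedure's construction.
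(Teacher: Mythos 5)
Your proposal is correct and follows the same route as the paper, whose entire proof is the one-line observation that the claim follows from Lemmas~\ref{lemma:costa41} and~\ref{lemma:costa42}: the procedure's congruences are necessary conditions on any hyperelliptic curve in $\mathcal{A}$, so exhausting all partitions of $r+1$ yields the contradiction. The one substantive ingredient you add that the paper leaves implicit is the justification, via the $2$-adic Newton polygon, that the $2$-rank (and hence $\#W = r+1$) is determined by the parities $(n_1,\ldots,n_g)$ alone because $q$ is even; the paper simply asserts this as part of the procedure's setup, so your verification is a worthwhile supplement rather than a different argument.
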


\begin{proof}
    This follows immediately from Lemmas~\ref{lemma:costa41} and \ref{lemma:costa42}.
\end{proof}

\begin{remark}
    To use the above procedure, one must pick some cutoff for how many extensions to check (an upper bound on $k$). In our testing, an upper bound of $2g$ appears sufficient to find almost all obstructions. Additionally, note that while this procedure finds a significant portion of restrictions on the coefficients mod 2, it is not exhaustive. For instance, it is insufficient to produce Conjectures~\ref{conj:rest_mod_2} and~\ref{conj:g4_rest}.
\end{remark}

Further, notice that the last two residues in the hypothesis of Theorem~\ref{thm:rest_genus4} with a zero padded on each are exactly the hypotheses of Theorem~\ref{thm:rest_mod_2}. This connection is true in general, allowing one to lift obstructions from genus $g$ to genus $g+1$:

\begin{proposition}
    If the residue pattern $(n_1, \ldots, n_g)$ is proved to be an obstruction for dimension $g$ Weil polynomials using steps (I)-(III) above, then the residue pattern $(n_1, \ldots, n_g, 0, 0, \ldots)$ with $i$ zeroes is an obstruction for genus $g + i$ Weil polynomials.
\end{proposition}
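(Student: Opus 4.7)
The plan is to show that the point counts $\#C(\F_{q^k})$ modulo 2 and the 2-rank $r$ are unchanged by padding the residue pattern with zeros, so that the partition contradiction generated by step (III) in genus $g$ transports verbatim to genus $g+i$.

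First I would compare the elementary symmetric functions modulo 2. For any genus $h$ Weil polynomial with residue pattern $(n_1,\ldots,n_h)$, the formulas of step (I) give $e_j \equiv n_j \pmod{2}$ for $1 \leq j \leq h$, while each $e_j$ with $h < j \leq 2h$ carries a positive power of the even number $q$ and hence is $\equiv 0 \pmod{2}$. Applying this with $h = g$ and with $h = g+i$ under the padded pattern $(n_1,\ldots,n_g,0,\ldots,0)$, the resulting mod-2 sequences $(e_j)$ agree in the shared range $1 \leq j \leq g$ and vanish for all $j > g$ in both cases.

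Next I would prove by induction on $k$ that $p_k^{(g)} \equiv p_k^{(g+i)} \pmod{2}$ using the Newton recursions of step (II). Modulo 2 only the summands with $e_{k-j} \not\equiv 0$ survive, forcing $k-j \leq g$, so every surviving term involves an $e$ common to the two polynomials and, by induction, a power sum that matches. One mild subtlety is that for $2g < k \leq 2(g+i)$ the genus $g$ computation already uses the truncated "$k > 2g$" branch while the genus $g+i$ computation still uses the "$k \leq 2(g+i)$" branch, but after discarding the 2-divisible $e$'s both branches collapse to the same expression in $e_1,\ldots,e_g$ and previously computed power sums. Combined with $q^k \equiv 0 \pmod{2}$, the identity $\#C(\F_{q^k}) = 1 + q^k - p_k$ shows that the point counts agree modulo 2 across the two settings.

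Finally, the 2-rank $r$, defined in the procedure as the largest index $j$ with $n_j = 1$, is manifestly invariant under appending zeros, so $\#W = r+1$ and the set of partitions of $r+1$ enumerated in step (III) are identical in genus $g$ and genus $g+i$. Since the mod-2 congruences $\sum_{d_i \mid k} d_i \equiv \#C(\F_{q^k}) \pmod{2}$ coincide in the two cases, no partition satisfies all of them in genus $g+i$ either, producing the required contradiction via Lemmas~\ref{lemma:costa41} and~\ref{lemma:costa42}. The only point demanding any care is the Newton-recursion case distinction, and that is handled uniformly once the $q$-divisible elementary symmetric polynomials are discarded; I do not anticipate a genuine obstacle.
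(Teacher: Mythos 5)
Your proof is correct and follows essentially the same route as the paper's: the 2-rank (hence $\#W$ and the partition constraint) is unchanged by padding, and the point counts mod 2 are unchanged because the padded and $q$-divisible coefficients all vanish mod 2. The paper justifies the power-sum step by citing an identity for power sums with appended zero variables, whereas you verify it directly by induction on Newton's identities; your version is more explicit but not a different argument.
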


\begin{proof}
    Since padding with zeroes does not change the 2-rank, the value $\sum d_i = r + 1 = \#W$ does not change. By~\cite[Thm. 1.1, 1.2]{Mosse_2019}, the $k$-th symmetric sum $p_k(x_1,\ldots,x_m)$ in $m$ variables is equal to the $k$-th symmetric sum $p_k(x_1,\ldots,x_m,0,\ldots,0)$ in $m+i$ variables, where the last $i$ variables are set to zero. Since $q \equiv 0 \pmod{2}$, the last $g$ coefficients of a Weil polynomial are 0 mod 2, so increasing the degree does not affect the parity of point counts, producing the same contradiction as by $(n_1,\ldots,n_g).$
\end{proof}

\section{Asymptotics}\label{sec:asymptotics}

In this section, we analyze the applicability of Theorems~\ref{thm:rest_mod_2} and~\ref{thm:rest_genus4} as $q \to \infty$ for $q=2^n$. We begin with a lemma about the distribution of coefficients of Weil polynomials:

\begin{lemma}\label{lemma:even_distribution}
    Let $\mathcal{W}_3(q)$ be the set of all Weil polynomials of the form $x^6 + sx^5 + tx^4 + ux^3 + tqx^2 + sq^2x + q^3$. Then the proportion of elements $\mathcal{W}_3(q)_{(r_1, r_2, r_3)}$ satisfying $(s, t, u) \equiv (r_1, r_2, r_3) \pmod{2}$ for any $r_1, r_2, r_3 \in \Z/2\Z$ is equal to $\frac{1}{8}$ as $q \to \infty$ over prime powers. 
\end{lemma}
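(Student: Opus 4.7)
The plan is to realize $\mathcal{W}_3(q)$ as the set of integer points in a Jordan-measurable region $R_q \subset \R^3$ whose volume grows like $cq^3$ while its boundary has $2$-dimensional measure only $O(q^{5/2})$, and then invoke a standard lattice point counting estimate to conclude that each of the eight parity classes contains $\tfrac{1}{8}$ of the lattice points up to a vanishing error.

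First I would parameterize. Since the reciprocal symmetry of a Weil polynomial pairs each root $\alpha$ with $q/\alpha = \overline{\alpha}$, every element of $\mathcal{W}_3(q)$ factors as $\prod_{i=1}^{3}(x^2 - \beta_i x + q)$ with $\beta_i \in [-2\sqrt{q},\,2\sqrt{q}]$, and expanding yields
\begin{align*}
s &= -(\beta_1+\beta_2+\beta_3), \\
t &= 3q + \sum_{i<j}\beta_i\beta_j, \\
u &= -\beta_1\beta_2\beta_3 + 2qs.
\end{align*}
Let $R_q \subset \R^3$ be the image of $[-2\sqrt{q},2\sqrt{q}]^3$ under this polynomial map $\Phi$, so that $\mathcal{W}_3(q)$ is in bijection with $R_q \cap \Z^3$. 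Substituting $\beta_i = \sqrt{q}\,\tilde\beta_i$ with $\tilde\beta_i \in [-2,2]$ identifies $R_q$ with $D_q \cdot R_1$, where $D_q := \operatorname{diag}(\sqrt{q},\, q,\, q^{3/2})$ and $R_1 \subset \R^3$ is a fixed compact semialgebraic set. Consequently $\operatorname{vol}(R_q) = q^3\operatorname{vol}(R_1) =: cq^3$ for some positive constant $c$.

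Next I would bound the boundary and apply lattice counting. A direct computation of the cross products $\partial_i\Phi \times \partial_j\Phi$ on each face of the cube shows these have norm $O(q^{3/2})$, and each face has $2$-dimensional area $O(q)$, so $\operatorname{area}(\partial R_q) = O(q^{5/2})$. By a standard lattice point estimate for bounded regions with piecewise smooth boundary (essentially a thickening of $\partial R_q$ by a unit cube), for any $v \in \{0,1\}^3$ one has
\[\#(R_q \cap (v + 2\Z^3)) = \tfrac{1}{8}\operatorname{vol}(R_q) + O(q^{5/2}) = \tfrac{1}{8}cq^3 + O(q^{5/2}),\]
and summing over $v$ gives $\#(R_q \cap \Z^3) = cq^3 + O(q^{5/2})$. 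Dividing the two yields $\tfrac{1}{8} + O(q^{-1/2}) \to \tfrac{1}{8}$ as $q \to \infty$, which is the desired conclusion.

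The hard part will be justifying the $O(\operatorname{area}(\partial R_q))$ error term cleanly, because the anisotropic scaling of $D_q$ means the standard Euclidean version of Davenport's theorem does not apply verbatim. The robust route is to cover $\partial R_q$ by $O(q^{5/2})$ axis-aligned unit cubes and note that each cube contributes at most $O(1)$ to the discrepancy of any particular residue class count, using that $R_q$ is a Lipschitz image of a compact domain with controlled Lipschitz constants in each coordinate direction. Given the explicit polynomial form of $\Phi$, this is essentially bookkeeping, but it is the one place where nontrivial analytic input is required.
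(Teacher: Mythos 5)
Your proof is correct and follows essentially the same route as the paper's: both realize $\mathcal{W}_3(q)$ as the lattice points of a region obtained from a fixed compact set by the anisotropic scaling $\operatorname{diag}(\sqrt{q},\,q,\,q^{3/2})$, and then compare the $\Theta(q^3)$ volume against an $O(q^{5/2})$ boundary contribution to equidistribute the eight parity classes. The only real difference is in how the region is described --- the paper imports its defining inequalities from Marseglia's Main Theorem A, while you derive it as the image of $[-2\sqrt{q},2\sqrt{q}]^3$ under the elementary symmetric functions of the root-traces $\beta_i$ (more self-contained, and both proofs leave the boundary-cube/tubular-neighborhood count at the same level of informality).
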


\begin{proof}
    By \cite[Main Theorem A]{marseglia2025}, the elements of $\mathcal{W}_3(q)$ are in bijection with triplets of integers $(s, t, u)$ with $s, t, u\in \Z$ satisfying the inequalities
    \[
    \begin{split}
        -6\sqrt{q} \le s &\le 6\sqrt{q}, \\
        4\sqrt{q} |s| - 9q \le t &\le \frac 13 s^2 + 3q, \\
        -\frac{2}{27}s^3 + \frac 13 st + qs - \frac{2}{27}(s^2 - 3t + 9q)^{3/2} \le u &\le -\frac{2}{27}s^3 + \frac 13 st + qs + \frac{2}{27}(s^2 - 3t + 9q)^{3/2}, \\
        -2qs - 2\sqrt{q}t - 2q\sqrt{q} \le u & \le -2qs + 2\sqrt{q}t + 2q\sqrt{q}.
    \end{split}
    \]
    These inequalities define a compact subset $S_3(q) \subset \R^3$, and we are interested in the $\Z^3$ lattice points that fall inside of $S_3(q)$. Consider the partition of $\R^3$ into cubes $C$ with side-length 2 aligned to the integer lattice. Each such cube is either fully contained in $S_3(q)$, disjoint from $S_3(q)$, or intersects both $S_3(q)$ and $\R^3 - S_3(q)$ (we call these interior, exterior, and boundary, respectively). In the first two cases, the vertices of the bottom-left length-1 cube contained in $C$ uniformly contribute either 1 or 0 polynomials, respectively, to every set $\mathcal{W}_3(q)_{(r_1, r_2, r_3)}$. Thus, to prove our result, it is sufficient to show that the number of interior cubes grows faster than the number of boundary cubes. 

    To estimate the cube counts, we consider the change of coordinates $\Psi: (a, b, c) \to (\alpha, \beta, \gamma)$ given by $\alpha = \frac{a}{\sqrt{q}}, \beta = \frac{b}{q}, \gamma = \frac{c}{q^{3/2}}$. Let $T_3(q)$ be the image of $S_3(q)$ under this map. Note that under this change of coordinates, the defining equations of $T_3(q)$ are constant in $q$, so $T_3(q)$ is independent of $q$. Furthermore, $T_3(q)$ is compact, and $\partial(T_3(q))$ is compact, piecewise smooth, and has finite surface area. 

    Since $\Psi$ is invertible, we consider $\Psi^{-1}(T_3(q)) = S_3(q)$. Under $\Psi^{-1}$, volume is scaled by $\det(\Psi^{-1}) = q^3$, and surface area is scaled by a factor of at most $q \cdot q^{3/2} = q^{5/2}$. Therefore, the volume of $S_3(q)$ and the surface area of the boundary are given by $\text{Vol}(S_3(q)) = \Theta(q^3)$ and $\text{Vol}(\partial S_3(q)) = \mathcal{O}(q^{5/2})$ respectively. 

    Now, every cube that intersects $S_3(q)$ is either an interior or boundary cube. We can bound the number of boundary cubes: every boundary cube must lie completely within a 4-neighborhood of $\partial S_3(q)$, which we denote $\mathcal{B}_4(\partial(S_3(q))$ so the number of boundary cubes is bounded above by the volume of this neighborhood divided by 8. Using a tubular neighborhood approximation, the volume is bounded by $\mathcal{O}(q^{5/2})$ so the number of boundary cubes is $\mathcal{O}(q^{5/2})$. 

    Now, we count interior cubes. The volume $S_3(q) - \mathcal{B}_4(\partial(S_3(q))$ can be covered completely by interior cubes, so the number of interior cubes is at least the volume of $S_3(q) - \mathcal{B}_4(\partial(S_3(q))$ divided by 8. This is $\Theta(q^3) - \mathcal{O}(q^{5/2}) = \Theta(q^3)$. We conclude that 
    \[\lim_{q \to \infty} \frac{\mathcal{W}_3(q)_{(r_1, r_2, r_3)}}{\mathcal{W}_3(q)} = \frac{\ell + \mathcal{O}(q^{5/2})}{8\ell + \mathcal{O}(q^{5/2})} = \frac 18,\]
    where $\ell = \Omega(q^3)$ is the number of interior cubes.
\end{proof}

\begin{remark}
    Note that Lemma \ref{lemma:even_distribution} is a result about Weil polynomials, not about Frobenius characteristic polynomials of isogeny classes of abelian varieties. The lemma applies to finite fields of any characteristic, but if we replace ``Weil polynomials'' with ``Frobenius characteristic polynomials of isogeny classes of abelian varieties'', this statement becomes false for characteristic 2 fields, as will become clear from the proof of Theorem~\ref{thm:ratio}. 
\end{remark}

An analogous result to Lemma~\ref{lemma:even_distribution} is true for Weil polynomials of 4-dimensional abelian variates, and the proof is nearly identical. 

\begin{lemma}
    Let $\mathcal{W}_4(q)$ be the set of all Weil polynomials of the form $f(x) = x^8 + sx^7 + tx^6 + ux^5 + vx^4 + uqx^3 + tq^2x^2 + sq^3x + q^4$. Then the proportion of elements $\mathcal{W}_4(q)_{(r_1,r_2,r_3,r_4)}$ satisfying $(s, t, u, v) \equiv (r_1, r_2, r_3, r_4) \pmod{2}$ for any $r_1, r_2, r_3, r_4 \in \Z/2\Z$ is equal to $\frac{1}{16}$ as $q\to\infty$ over prime powers. 
\end{lemma}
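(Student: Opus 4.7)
The plan is to mirror the proof of Lemma~\ref{lemma:even_distribution} in one higher dimension. First, I would realize $\mathcal{W}_4(q)$ as the set of $\Z^4$-lattice points inside a compact region $S_4(q)\subset \R^4$ cut out by explicit polynomial inequalities in $(s,t,u,v)$; these come from the condition that all roots of the candidate Weil polynomial lie on the circle $|z|=\sqrt q$, and an explicit description may be extracted from the genus-4 analogue of \cite[Main Theorem A]{marseglia2025} (equivalently, from the classical resultant/positivity conditions for $f(x)$ to have all roots of modulus $\sqrt q$). The key geometric properties I would verify are that $S_4(q)$ is compact and that $\partial S_4(q)$ is piecewise smooth with only finitely many smooth pieces, each cut out by a polynomial equality.

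Next, I would apply the diagonal rescaling $\Psi:(s,t,u,v)\mapsto (s/q^{1/2},\,t/q,\,u/q^{3/2},\,v/q^2)$. By homogeneity of the inequalities, $\Psi$ carries $S_4(q)$ to a $q$-independent compact set $T_4\subset \R^4$ with piecewise smooth boundary of finite $3$-dimensional surface measure. The Jacobian determinant of $\Psi^{-1}$ is $q^{1/2}\cdot q\cdot q^{3/2}\cdot q^2=q^5$, so $\mathrm{Vol}(S_4(q))=\Theta(q^5)$. For each smooth $3$-dimensional face of $\partial T_4$, the corresponding face of $\partial S_4(q)$ is rescaled by at most the largest product of three of the four factors $q^{1/2},q,q^{3/2},q^2$, namely $q^{9/2}$, yielding a total boundary surface area of $\mathcal{O}(q^{9/2})$.

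Then I would partition $\R^4$ into axis-aligned cubes of side-length $2$ aligned to the integer lattice. Each such cube contains exactly $2^4=16$ integer lattice points, one in each residue class of $(\Z/2\Z)^4$, so every interior cube contributes exactly one polynomial to each of the sixteen sets $\mathcal{W}_4(q)_{(r_1,r_2,r_3,r_4)}$, while each boundary cube contributes at most $16$ points total. Every boundary cube lies inside a tubular $4$-neighborhood of $\partial S_4(q)$ whose volume is $\mathcal{O}(q^{9/2})$ by a standard tubular-neighborhood estimate, so there are $\mathcal{O}(q^{9/2})$ boundary cubes and $\Theta(q^5)-\mathcal{O}(q^{9/2})=\Theta(q^5)$ interior cubes. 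Letting $\ell=\Theta(q^5)$ denote the number of interior cubes, the proportion of Weil polynomials in a fixed residue class is
\[
\frac{|\mathcal{W}_4(q)_{(r_1,r_2,r_3,r_4)}|}{|\mathcal{W}_4(q)|}=\frac{\ell+\mathcal{O}(q^{9/2})}{16\ell+\mathcal{O}(q^{9/2})}\longrightarrow \frac{1}{16}.
\]

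The main obstacle I anticipate is the bookkeeping for the explicit genus-4 inequalities defining $S_4(q)$ and the verification that, after rescaling, $\partial T_4$ is compact and piecewise smooth with finitely many analytic pieces, so that the tubular-neighborhood bound applies. Once this geometric setup is in hand, the volume and surface-area scalings and the cube-partition argument carry over from the proof of Lemma~\ref{lemma:even_distribution} essentially verbatim, only with the exponents adjusted from $(3,5/2)$ to $(5,9/2)$ and the count of residue classes from $8$ to $16$.
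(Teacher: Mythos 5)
Your proposal is correct and follows essentially the same route as the paper, which simply invokes \cite[Main Theorem B]{marseglia2025} for the genus-4 inequalities and repeats the cube-partition argument of Lemma~\ref{lemma:even_distribution} with the exponents adjusted exactly as you compute ($q^5$ for volume, $q^{9/2}$ for the boundary, $16$ residue classes). The only quibble is the statement that a side-$2$ cube ``contains exactly $16$ integer lattice points''; as in the paper, one should count only the vertices of the bottom-left unit sub-cube (or use half-open cubes) to get one representative per residue class, but this does not affect the argument.
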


\begin{proof}
    This follows from \cite[Main Theorem B]{marseglia2025} using the same method as the proof of Lemma~\ref{lemma:even_distribution}.
\end{proof}

\noindent Now we restrict our attention to irreducible Weil polynomials:

\begin{lemma}\label{lem:irreducible}
    As $q \to \infty$, the proportions $\theta_3(q)$ and $\theta_4(q)$ of elements of $\mathcal{W}_3(q)$ and $\mathcal{W}_4(q)$ which are irreducible over $\Z$ are both 1. 
\end{lemma}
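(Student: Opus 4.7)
The plan is to bound the number of reducible Weil polynomials in $\mathcal{W}_g(q)$ by stratifying them according to their factorization type and showing each stratum contributes a strictly smaller power of $q$ than the total count $\Theta(q^{g(g+1)/4})$ established in the proof of Lemma~\ref{lemma:even_distribution} (and its genus-$4$ analogue). If $f \in \mathcal{W}_g(q)$ is reducible, it admits a nontrivial monic factorization $f = h_1 \cdots h_k$ over $\mathbb{Z}$ with $k \geq 2$. Since all roots of $f$ have absolute value $\sqrt{q}$ and the root-multiset of each $h_i$ is closed under the involution $\alpha \mapsto q/\alpha$ (complex conjugation), each $h_i$ is either a dimension-$g_i$ Weil polynomial (from paired roots) or a linear factor $x \mp \sqrt{q}$ (which only arises when $q$ is a perfect square). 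Pairing such linear factors together as $x^2 - q$ lets us encode any such factorization as a partition $g = g_1 + \cdots + g_k$ with $k \geq 2$ and $g_i \geq 1$.

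The key input is the standard count: the number of dimension-$m$ Weil-type polynomials over $\F_q$ is $O(q^{m(m+1)/4})$, since the free coefficients $(a_1, \ldots, a_m)$ satisfy $|a_i| \leq \binom{2m}{i} q^{i/2}$. Multiplying across a fixed partition $(g_1, \ldots, g_k)$ gives an upper bound of $O\bigl(q^{\sum_i g_i(g_i+1)/4}\bigr)$ on the number of reducible Weil polynomials of that type. The algebraic identity
\[
\frac{g(g+1)}{4} - \sum_{i=1}^k \frac{g_i(g_i+1)}{4} = \frac{1}{2}\sum_{i<j} g_i g_j \geq \frac{1}{2},
\]
valid whenever $k \geq 2$, then shows each factorization type contributes an $O(q^{-1/2})$ fraction of the total $\Theta(q^{g(g+1)/4})$.

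Summing over the finitely many partition types of $g$ yields $1 - \theta_g(q) = O(q^{-1/2}) \to 0$. Concretely, for $g = 3$ the dominant type is $(2,1)$, contributing $O(q^{3/2} \cdot q^{1/2}) = O(q^2)$ reducible polynomials against $\Theta(q^3)$ total; for $g = 4$ the dominant type is $(3,1)$, contributing $O(q^{7/2})$ against $\Theta(q^5)$.

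The main technical obstacle I anticipate is the bookkeeping in the $q$-square case, where odd-degree integer factors of the form $(x \mp \sqrt{q}) \cdot h'$ become admissible; these can be absorbed into the partition framework but contribute strictly lower powers of $q$ than their even-degree analogues of the same Weil dimension, so they do not affect the leading-order estimate. A secondary nuisance is that a reducible polynomial may admit several factorizations, but since only an upper bound on the reducible count is needed, choosing any one proper factor via an existence-based enumeration avoids double-counting entirely.
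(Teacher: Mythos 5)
Your proposal is correct and follows essentially the same route as the paper: bound the reducible polynomials by summing, over nontrivial partitions $g = g_1 + \cdots + g_k$, the products of the counts $\mathcal{O}(q^{g_i(g_i+1)/4})$ of lower-dimensional Weil polynomials, and compare exponents against the total $\Theta(q^{g(g+1)/4})$. Your explicit identity $\frac{g(g+1)}{4} - \sum_i \frac{g_i(g_i+1)}{4} = \frac12\sum_{i<j} g_i g_j \geq \frac12$ and your handling of the $x \mp \sqrt{q}$ factors when $q$ is a perfect square are refinements the paper leaves implicit, but the underlying argument is the same.
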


\begin{proof}
    To count the total number of degree $2g$ $q$-Weil polynomials, we are interested in the quantity $\#(\Lambda_q \cap V_g) = {\mathcal{O}}( \frac{\text{Vol
    } (V_g)}{\text{Covol } (\Lambda_q)})$ from \cite[Prop. 2.3.1]{dipippo2000realpolynomialsrootsunit} with $\Lambda_q$ as in \cite[\S 3.2]{dipippo2000realpolynomialsrootsunit}. From the same paper, we obtain $\text{Vol}(V_g) = \mathcal{O}(1)$ for $g=\mathcal{O}(1)$. Using the basis of $\Lambda_q$, we compute \[
    \text{Covol}(\Lambda_q) = \prod_{i=i}^{g} q^{-i/2} = q^{-\sum_{i=1}^n i/2} = q^{-g(g+1)/4},
    \]
    and conclude that $\#(\Lambda_q \cap V_g) = \mathcal{O}(q^{n(n+1)/4}).$
    
    Thus, $\mathcal{W}_1(q) \sim q^{1/2}$, $\mathcal{W}_2(q) \sim q^{3/2}$, $\mathcal{W}_3(q) \sim q^3$, and $\mathcal{W}_4(q) \sim q^5$. Since every reducible Weil polynomial must factor into lower degree Weil polynomials, we obtain the natural bound: \[
    \theta_g(q) \geq \mathcal{W}_g(q)-\prod_{\substack{d_1+\ldots+d_k=g\\k>1}} \mathcal{W}_{d_i}(q),
    \] 
    and computing these values for partitions of $g=3$ and $g=4$, we obtain the desired result.
\end{proof}

\begin{theorem}\label{thm:ratio}
    Let $\tau_3(q)$ denote the fraction of all isogeny classes of 3-dimensional abelian varieties that do not contain the Jacobian of a hyperelliptic curve according to Theorem~\ref{thm:rest_mod_2}. Then 
    \[\lim_{q \to \infty} \tau_3(q) = \frac 12\]
    where the limit is taken over powers of 2. 
\end{theorem}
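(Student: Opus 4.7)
The plan is to partition the isogeny classes of 3-dimensional abelian varieties over $\F_{2^n}$ into ordinary and non-ordinary contributions, show that the non-ordinary part is asymptotically negligible, and then apply Lemma~\ref{lemma:even_distribution} to the ordinary part. The key observation is that both excluded residue patterns $(0,1,1)$ and $(1,0,1)$ from Theorem~\ref{thm:rest_mod_2} have $u$ odd, so they pick out only ordinary isogeny classes, where the correspondence between Weil polynomials and isogeny classes is cleanest. This is precisely the phenomenon alluded to in the remark after Lemma~\ref{lemma:even_distribution}.

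We first recall two standard facts. An abelian threefold over $\F_{2^n}$ is ordinary if and only if its 2-rank is 3, equivalently $u = a_3$ is odd; and by Deligne's theorem on ordinary abelian varieties, every Weil polynomial in $\mathcal{W}_3(q)$ with $u$ odd is the characteristic polynomial of Frobenius of a unique ordinary isogeny class. Thus the number of ordinary isogeny classes equals $|\{f \in \mathcal{W}_3(q) : u \text{ odd}\}|$, which by Lemma~\ref{lemma:even_distribution} is asymptotic to $\tfrac{1}{2}|\mathcal{W}_3(q)|$. Combined with Lemma~\ref{lem:irreducible} giving $|\mathcal{W}_3(q)| \sim q^3$, this count is $\Theta(q^3)$.

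Next we bound non-ordinary isogeny classes by Newton polygon analysis. For each admissible non-ordinary Newton polygon for an abelian threefold (a finite list including the 2-rank-2 polygon $(0,0,\tfrac12,\tfrac12,1,1)$ as well as polygons of 2-rank $\leq 1$), Honda--Tate forces the divisibility $v_2(a_i) \geq y_{\mathrm{NP}}(i)$ on the Weil coefficients. The least restrictive case is the 2-rank-2 polygon, which demands $v_2(u) \geq n/2$ and hence confines $u$ to a sublattice of $\Z$ of index $\sim \sqrt{q}$; intersecting with the Marseglia region from \cite{marseglia2025} yields $O(q^{5/2})$ classes. The remaining non-ordinary polygons impose stronger constraints on $s$ and $t$ as well, contributing $O(q^{2})$ or fewer classes apiece. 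Summing over the finite list of admissible non-ordinary NPs yields a total of $O(q^{5/2}) = o(q^3)$ non-ordinary isogeny classes.

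Combining these counts, the excluded isogeny classes are all ordinary, and by Lemma~\ref{lemma:even_distribution} each of the two excluded residue patterns contributes $\sim \tfrac{1}{8}|\mathcal{W}_3(q)|$ members, totaling $\sim \tfrac{1}{4}|\mathcal{W}_3(q)|$. The total number of isogeny classes is $\sim \tfrac{1}{2}|\mathcal{W}_3(q)| + O(q^{5/2}) = \tfrac{1}{2}|\mathcal{W}_3(q)|(1+o(1))$, so $\tau_3(q) \to (1/4)/(1/2) = 1/2$. The main obstacle is the Newton polygon bookkeeping in the third step: one must enumerate the admissible NPs for $g=3$ and verify that no non-ordinary NP contributes on the order of $q^3$. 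This reduces to the fact that every non-ordinary slope strictly between 0 and 1 forces at least one $a_i$ into a sublattice of $\Z$ of index tending to infinity with $q$, which gives room for the $O(q^{5/2})$ bound to hold uniformly over all non-ordinary NPs.
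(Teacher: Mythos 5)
Your proof is correct and follows essentially the same route as the paper: restrict to ordinary isogeny classes (equivalently $u$ odd), identify these asymptotically with irreducible Weil polynomials via Lemma~\ref{lem:irreducible}, and apply the equidistribution of Lemma~\ref{lemma:even_distribution} to obtain $\frac{1/4}{1/2}=\frac12$. The only substantive difference is that you bound the non-ordinary classes directly by a Newton-polygon divisibility argument (e.g.\ $v_2(u)\geq n/2$ forcing $O(q^{5/2})$ such classes), whereas the paper instead cites \cite[Cor 4.2.1]{borodinmaygeneral} for the asymptotic dominance of ordinary simple classes and \cite[Thm.~1.4]{haloui2010} (rather than Deligne's equivalence, which is the less precise attribution you give) for the bijection between irreducible ordinary Weil polynomials and simple ordinary isogeny classes.
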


\begin{proof}
    As a direct consequence of \cite[Cor 4.2.1]{borodinmaygeneral}, we have that as $q \to \infty$, the ratio of the number of ordinary simple isogeny classes of 3-dimensional abelian varieties to the total number of isogeny classes of 3-dimensional abelian varieties is 1. Therefore, since we are concerned with the $q \to \infty$ limit, we only need to consider ordinary simple classes of abelian varieties. In particular, these have $p$-rank 3 in dimension 3, and by \cite[Thm. 1.4]{haloui2010}, irreducible Weil polynomials with $p$-rank 3 are in bijection with the Frobenius characteristic polynomials of ordinary simple isogeny classes of abelian varieties. Applying Lemma~\ref{lem:irreducible}, as $q \to \infty$, all ordinary Weil polynomials are characteristic polynomials of isogeny classes of abelian varieties. Moreover, note that in the case of $p=2$, a Weil polynomial has $p$-rank 3 if and only if $c \equiv 1 \pmod{2}$, so we conclude that the proportion of isogeny classes with Frobenius characteristic polynomials having $c \equiv 1 \pmod{2}$ is 1 as $q \to \infty$.


    Therefore, $q \to \infty$ over powers of 2, exactly $\frac 14$ of all triplets $(a, b, c)$ of defining coefficients of characteristic polynomials of isogeny classes of abelian varieties fall into each of the classes $(0, 0, 1), (0, 1, 1), (1, 0, 1), \\(1, 1, 1)$. Since two of these are an obstruction to a hyperelliptic Jacobian existing in the corresponding isogeny class, $\lim_{q \to \infty} \tau_3(q) = \frac 12$, as desired. 
\end{proof}

To extend this result to genus 4, we begin with a statement about simple isogeny classes in genus 4. 

\begin{lemma} \label{lemma:g4simple}
    As in \cite[Thm 1.1]{dipippo2000realpolynomialsrootsunit}, let us denote the number of isogeny classes of $g$-dimensional abelian varieties over $\F_q$ as \[\mathcal{I}(g, q) = \left(\frac{2^g}{g!} \prod_{i=1}^{g} \left(\frac{2i}{2i - 1}\right)^{g + 1 - i}\right) r(q)q^{g(g+1)/4},\] where $r(q) = \phi(q)/q$ with $\phi(x)$ the Euler function. Let $\mathcal{A}$ be an isogeny class of abelian varieties over $
    F_q$. Write $\mathcal{A} \sim \mathcal{A}_1 \times\ldots\times\mathcal{A}_k$ as a product of simple isogeny classes with $\dim(\mathcal{A}_i)=d_i$, so $d_1+\ldots+d_n=g$ is a partition $P$ of $g$. Define $N_g(P)$ to be the number of $g$-dimensional isogeny classes whose simple factors have dimensions given by $P$. Then $N_4(\{4\})={\mathcal{O}}(\mathcal{I}(4, q))$.
\end{lemma}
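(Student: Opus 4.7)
The plan is to show directly that every partition $P$ of $4$ other than $\{4\}$ contributes asymptotically fewer isogeny classes than $\{4\}$ itself; since $\mathcal{I}(4,q) = \sum_{P} N_4(P)$, this forces $N_4(\{4\})$ to be the dominant term and in particular yields the stated $\mathcal{O}$-bound (indeed the stronger statement $N_4(\{4\}) \sim \mathcal{I}(4,q)$).

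First I would read off from the formula in the statement that $\mathcal{I}(d,q) = \Theta(q^{d(d+1)/4})$ as $q \to \infty$ for each fixed $d$, so that $\mathcal{I}(1,q), \mathcal{I}(2,q), \mathcal{I}(3,q), \mathcal{I}(4,q)$ are of orders $q^{1/2}, q^{3/2}, q^{3}, q^{5}$ respectively. Next, for a nontrivial partition $P = \{d_1,\ldots,d_k\}$ of $4$, the essentially unique decomposition of an isogeny class into simple factors (Poincar\'e reducibility combined with Honda--Tate) makes the map sending an ordered $k$-tuple of simple classes of dimensions $d_1,\ldots,d_k$ to their product at most $k!$-to-one, so
\[
N_4(P) \;\le\; \prod_{i=1}^{k} \mathcal{I}(d_i,q) \;=\; \mathcal{O}\!\left(q^{\sum_{i} d_i(d_i+1)/4}\right).
\]
Evaluating the exponent on each of the partitions $\{3,1\}, \{2,2\}, \{2,1,1\}, \{1,1,1,1\}$ gives $7/2,\ 3,\ 5/2,\ 2$ respectively, all strictly less than the exponent $5$ associated to $\{4\}$.

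Summing over the nontrivial partitions then yields
\[
\mathcal{I}(4,q) - N_4(\{4\}) \;=\; \sum_{P \neq \{4\}} N_4(P) \;=\; \mathcal{O}(q^{7/2}) \;=\; o(\mathcal{I}(4,q)),
\]
so $N_4(\{4\}) = \Theta(\mathcal{I}(4,q))$ and the stated bound follows. The main (and essentially only) subtlety is justifying the product bound above: one has to be sure that different orderings of the simple factors cannot make $N_4(P)$ exceed the product of the $\mathcal{I}(d_i,q)$'s, and this is immediate from the uniqueness of the simple decomposition. Beyond this, the proof is a routine exponent comparison, and I do not foresee any serious obstacles.
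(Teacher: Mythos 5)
Your proposal is correct and follows essentially the same route as the paper: bound $N_4(P)$ for each nontrivial partition $P$ of $4$, observe that every such bound is $\mathcal{O}(q^{7/2})$ while $\mathcal{I}(4,q)=\Theta(q^5)$, and subtract. The only cosmetic difference is that you use the crude product bound $N_4(P)\le\prod_i\mathcal{I}(d_i,q)$ where the paper writes out exact multiset counts (binomial coefficients and its earlier bound on $N_3(\{3\})$), but the exponent comparison is identical and your version in fact delivers the intended asymptotic $N_4(\{4\})\sim\mathcal{I}(4,q)$ cleanly.
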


\begin{proof}
    We compute the counts for each partition different from $\{4\}$. 
    \begin{equation*}
        \begin{split}
            N_4(\{1,1,1,1\}) &= \binom{\mathcal{I}(1,q)+3}{4} = {\mathcal{O}}(q^2), \\
            N_4(\{1,1,2\}) &= \binom{\mathcal{I}(1,q) + 1}{2}\left(\mathcal{I}(2,q)-\binom{\mathcal{I}(1,q)+1}{2}\right) = {\mathcal{O}}(q(q^\frac32-q)) = {\mathcal{O}}(q^\frac52),\\
            N_4(\{1,3\}) &= \mathcal{I}(1,q)N_3({3}) = {\mathcal{O}}(q^\frac12q^3) = {\mathcal{O}}(q^\frac72), \\
            N_4(\{2,2\}) &= \left(\mathcal{I}(2,q) - \binom{\mathcal{I}(1,q)+1}{2}\right)^2 = {\mathcal{O}}\left((q^\frac32 - q)^2\right) = {\mathcal{O}}(q^\frac32).
        \end{split}
    \end{equation*}
    Note that the quantity $N_3(\{3\})={\mathcal{O}}(q^3)$ is proven in \cite[Lem. 4.4]{borodinmaygeneral}. We conclude by computing \[
    N_4(\{4\}) = \mathcal{I}(4,q) - \sum_{P\neq\{4\}} N_4(P) = {\mathcal{O}}(q^5) - {\mathcal{O}}(q^\frac72) = {\mathcal{O}}(q^5) = \O(\mathcal{I}(4,q)).
    \]
\end{proof}

\begin{theorem}\label{thm:ratiog4}
    Let $\tau_4(q)$ denote the proportion of isogeny classes of 4-dimensional abelian varieties over $\F_q$, where $q=2^n$, which are proven not to contain a hyperelliptic Jacobian by Theorem~\ref{thm:rest_genus4}. Then we have the following limit: \[
    \lim_{q\to\infty} \tau_4(q) = \frac58.
    \] 
\end{theorem}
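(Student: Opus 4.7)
The plan is to mimic the proof strategy of Theorem~\ref{thm:ratio} in dimension 4, reducing the count of obstructed isogeny classes to a count of residue patterns modulo 2 within the ordinary simple regime. First, I would argue that asymptotically almost every 4-dimensional isogeny class is ordinary and simple: simplicity follows from Lemma~\ref{lemma:g4simple}, which yields $N_4(\{4\}) = \O(\mathcal{I}(4, q)) = \O(q^5)$, so the non-simple partitions contribute only an $o(1)$ fraction of all isogeny classes. Ordinariness within the simple locus should follow from the same argument underlying \cite[Cor.~4.2.1]{borodinmaygeneral} in genus 3: the non-ordinary strata carve out a lower-dimensional subvariety of the compact region $V_4$ from \cite{dipippo2000realpolynomialsrootsunit}, and hence contain only $o(q^5)$ lattice points.

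Next, I would invoke an analog of \cite[Thm.~1.4]{haloui2010} for dimension 4 to identify ordinary simple isogeny classes with irreducible Weil polynomials of $p$-rank 4. By Lemma~\ref{lem:irreducible}, the irreducibility restriction is asymptotically vacuous, since all but an $o(1)$ fraction of $\mathcal{W}_4(q)$ is irreducible. For $q = 2^n$, an ordinary Weil polynomial is precisely one whose middle coefficient $v$ is odd, so asymptotically the set of isogeny classes corresponds bijectively to the set of Weil polynomials with $v \equiv 1 \pmod{2}$.

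Applying the genus 4 equidistribution lemma (the analog of Lemma~\ref{lemma:even_distribution} stated just above), each of the 16 residue patterns in $(\Z/2\Z)^4$ occurs with limiting proportion $\tfrac{1}{16}$ among all Weil polynomials; restricted to the eight patterns satisfying $v \equiv 1 \pmod{2}$, each occurs with conditional proportion $\tfrac{1}{8}$ among ordinary simple isogeny classes. Inspecting Theorem~\ref{thm:rest_genus4}, precisely cases (I)--(V) satisfy $v \equiv 1 \pmod{2}$, while cases (VI) and (VII) have $v \equiv 0$ and therefore sit in the non-ordinary region which contributes nothing asymptotically. Summing the five surviving patterns yields $\tau_4(q) \to 5 \cdot \tfrac{1}{8} = \tfrac{5}{8}$, as claimed.

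The main obstacle is rigorously establishing the ordinary-dominance half of the first step. The genus 3 version was cited as a direct corollary of prior work, but a clean genus 4 analog may require either a reference to parallel machinery or a direct calculation stratifying the simple Weil polynomials by Newton polygon and bounding each non-ordinary stratum by $\O(q^{5 - \delta})$ for some $\delta > 0$ using the volume estimates of \cite{dipippo2000realpolynomialsrootsunit}. Everything else is a bookkeeping exercise: enumerating which of the seven obstruction patterns of Theorem~\ref{thm:rest_genus4} have odd $v$ and dividing by the conditional density.
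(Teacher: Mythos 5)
Your proposal is correct and follows essentially the same route as the paper: Lemma~\ref{lemma:g4simple} plus the ordinary-dominance result of \cite{dipippo2000realpolynomialsrootsunit} to reduce to ordinary simple classes, the dimension-4 Haloui classification (the paper cites \cite[Thm 1.2]{haloui2011characteristicpolynomialsabelianvarieties}) together with Lemma~\ref{lem:irreducible} to pass to Weil polynomials with $v \equiv 1 \pmod 2$, and then the equidistribution lemma to count the five obstruction patterns (I)--(V) with odd $v$ out of the eight ordinary residue classes, giving $\tfrac58$. The ordinary-dominance step you flag as the main obstacle is handled in the paper by direct citation rather than a new argument, exactly as you anticipate.
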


\begin{proof}
    The proof is analogous to that of Theorem~\ref{thm:ratio}. Using Lemma~\ref{lemma:g4simple} and \cite[Thm 1.2]{dipippo2000realpolynomialsrootsunit}, we see that as $q\to\infty$, the proportion of isogeny classes of 4-dimensional abelian varieties over $\F_q$ which are simple and ordinary approaches 1. Using this fact, we deduce from \cite[Thm 1.2]{haloui2011characteristicpolynomialsabelianvarieties} that irreducible Weil polynomials with 2-rank equal to 4 are asymptotically in bijection with the Frobenius polynomials of ordinary simple isogeny classes of abelian varieties. We apply Lemma~\ref{lem:irreducible}, and the argument follows identically as in the proof of Theorem~\ref{thm:ratio}.
\end{proof}

\begin{lemma} \label{lemma:zerojac}
    Fix a field $\F_q$ of characteristic 2. As $g\to\infty$, the proportion of isogeny classes of $g$-dimensional abelian varieties over $\F_q$ which contain the Jacobian of a hyperelliptic curve approaches 0.
\end{lemma}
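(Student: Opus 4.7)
My plan is to compare two quantities: the total number of isogeny classes of $g$-dimensional abelian varieties over $\F_q$, and an upper bound on the number of such classes that contain a hyperelliptic Jacobian. Since every genus $g$ hyperelliptic curve over $\F_q$ has a Jacobian belonging to exactly one isogeny class, the number of isogeny classes containing a hyperelliptic Jacobian is at most the number of isomorphism classes of genus $g$ hyperelliptic curves over $\F_q$.

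For the denominator, I would invoke \cite[Thm 1.1]{dipippo2000realpolynomialsrootsunit} (exactly as used in the proof of Lemma~\ref{lemma:g4simple}) to assert that for fixed $q$,
\[\mathcal{I}(g,q) = \Theta\bigl(q^{g(g+1)/4}\bigr),\]
so the exponent of $q$ is quadratic in $g$.

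For the numerator, I would use the upper bound on the number of genus $g$ hyperelliptic curves over $\F_q$ implicit in the algorithm analysis of Section~\ref{sec:algorithm}, which is $\tilde{\O}(q^{2g+2})$, exponent linear in $g$. If this bound is not immediately extractable in the form needed (it is stated as a runtime, not a curve count), I would instead bound the count crudely by parameterizing hyperelliptic curves in Artin-Schreier form $y^2 + h(x)y = f(x)$ with $\deg h \leq g+1$ and $\deg f \leq 2g+2$, and quotienting by the action of $\PGL_2(\F_q)$ together with substitutions of the form $y \mapsto y + s(x)$; this still yields a bound of the form $q^{O(g)}$, which is all that is needed.

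Combining these estimates, the proportion in question is bounded by
\[\frac{\tilde{\O}(q^{2g+2})}{\Theta(q^{g(g+1)/4})} = \tilde{\O}\bigl(q^{-(g^2 - 7g - 8)/4}\bigr),\]
which tends to $0$ as $g \to \infty$ since $q \geq 2$ and the exponent of $q$ tends to $-\infty$. The main obstacle I anticipate is cleanly extracting the upper bound on the number of hyperelliptic curves from Section~\ref{sec:algorithm} and ensuring it applies for all $g$, not just those with $\gcd(g+1, q-1) = 1$ to which the enumeration algorithm is stated; once such a polynomial-in-$q$-of-exponent-linear-in-$g$ bound is in hand, the comparison of growth rates and the limit computation are elementary.
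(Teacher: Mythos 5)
Your proposal is correct and follows essentially the same route as the paper: compare the $\Theta(q^{g(g+1)/4})$ count of isogeny classes from \cite[Thm 1.1]{dipippo2000realpolynomialsrootsunit} against a $q^{O(g)}$ bound on isomorphism classes of genus $g$ hyperelliptic curves (the paper cites its Corollary~\ref{cor:num_vs}, which gives $\mathcal{O}(q^{2g-1})$), and note that each curve contributes at most one isogeny class. Your fallback crude parametrization is a reasonable way to sidestep the $\gcd(g+1,q-1)=1$ hypothesis that the paper's cited bound technically carries, but the overall argument is the same.
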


\begin{proof}
    As stated in \cite[Thm 1.1]{dipippo2000realpolynomialsrootsunit}, the total number of isogeny classes grows as $\Theta(q^{g(g+1)/4})$; and as noted in Corollary~\ref{cor:num_vs}, the number of isomorphism classes of genus $g$ hyperelliptic curves over $\F_q$ grows as ${\mathcal{O}}(q^{2g-1})$. The result follows.
\end{proof}

\begin{remark}
    The procedure described in Section~\ref{sec:proof_alg} can be easily implemented in code to compute all such parity obstructions that can be proven using the elementary point counting argument. By computing these obstructions for dimension $g \leq 22$, we have observed that the proportion of the $2^g$ possible patterns of residues modulo 2 that can be proven to be obstructions to the associated isogeny class containing a hyperelliptic Jacobian rapidly approaches $100\%$. This is expected, as the proportion of isogeny classes of $g$-dimensional abelian varieties over a given field $\F_q$ which contain a hyperelliptic Jacobian approaches zero as $g\to\infty$ as in Lemma~\ref{lemma:zerojac}. In Figure~\ref{fig:genus_g} we demonstrate this pattern, where the data was generated using a cutoff of point counts up to extension fields of degree at most only $2g$. The complete set of obstructions up to genus 20 generated using this method can be found at \cite{obstructions_database}.
\end{remark}

\begin{conjecture}
    The proportion of isogeny classes of 3-dimensional abelian varieties over characteristic 2 finite fields containing a Jacobian of a hyperelliptic curve approaches $\frac{1}{2}$ as $q \to \infty$.
\end{conjecture}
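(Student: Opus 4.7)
Let $\rho_3(q)$ denote the proportion of isogeny classes of $3$-dimensional abelian varieties over $\F_q$ that contain the Jacobian of a hyperelliptic curve. The upper bound $\limsup_{q \to \infty} \rho_3(q) \leq \tfrac12$ is immediate from Theorem~\ref{thm:ratio}: every class obstructed by Theorem~\ref{thm:rest_mod_2} is necessarily outside those counted by $\rho_3(q)$, and such classes have proportion $\tau_3(q) \to \tfrac12$. The conjecture therefore reduces to the matching lower bound, i.e.\ to showing that almost every un-obstructed isogeny class is actually hit by a hyperelliptic Jacobian.

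The plan is to first reduce to ordinary simple classes of $2$-rank $3$. By \cite[Cor 4.2.1]{borodinmaygeneral} these account for a proportion $1 - o(1)$ of all $3$-dimensional isogeny classes, and by \cite[Thm. 1.4]{haloui2010} they are in bijection with irreducible degree-$6$ Weil polynomials with $u \equiv 1 \pmod 2$. Note in particular that the classes with $u \equiv 0 \pmod 2$ (which include those addressed by Conjecture~\ref{conj:rest_mod_2}) are non-ordinary and therefore form a vanishing proportion; they may be safely ignored. Among the remaining ordinary simple classes, exactly the residues $(s,t,u) \equiv (0,0,1)$ and $(1,1,1) \pmod 2$ are un-obstructed by Theorem~\ref{thm:rest_mod_2}, and by Lemma~\ref{lemma:even_distribution} these account asymptotically for $(\tfrac12 - o(1))\mathcal{I}(3,q)$ classes.

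The substantive step is then to show that the map sending an $\F_q$-isomorphism class of genus $3$ hyperelliptic curves to the isogeny class of its Jacobian has image of size $(1-o(1))$ times the number of un-obstructed ordinary simple classes. By the enumeration of Section~\ref{sec:algorithm} together with Corollary~\ref{cor:num_vs} there are $\Theta(q^5)$ hyperelliptic curves and only $\Theta(q^3)$ un-obstructed isogeny classes, giving an average of $\Theta(q^2)$ Jacobians per hit class. The natural way to convert this surplus into a lower bound on the number of \emph{distinct} hit classes is an equidistribution argument: one would show that the rescaled Weil coefficients $(s/\sqrt q, t/q, u/q^{3/2})$ of hyperelliptic Jacobians converge weakly to an absolutely continuous measure supported on the un-obstructed strata of the Weil region, and then apply a Riemann-sum comparison analogous to the one used in the proof of Lemma~\ref{lemma:even_distribution}.

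The main obstacle is establishing equidistribution in a form sharp enough to rule out pathological concentration on an $o(q^3)$-sized subset of isogeny classes. Standard Katz-Sarnak–type statements go in the wrong direction (fixed curve, varying field extension), so what is needed is a ``horizontal'' equidistribution over the hyperelliptic moduli at fixed $q$. A more hands-on alternative would be a Lang-Weil argument: for each un-obstructed Weil polynomial $f$, bound below the number of Weierstrass equations $y^2 + h(x)y = f_0(x)$ whose Jacobian has characteristic polynomial $f$, and show that this count is positive for all but $o(q^3)$ choices of $f$. Either route requires uniform control on the fibers of the Torelli map for hyperelliptic curves in characteristic $2$, which is expected to be the principal technical difficulty.
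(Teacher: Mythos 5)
This statement is a conjecture in the paper: the authors do not prove it, and support it only with computational evidence (Figure~\ref{fig:even_hyp_counts}), in analogy with the unproven claim of \cite{costa2020} for odd characteristic. So there is no paper proof to compare against, and your proposal should be judged as a standalone attempt.

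As such, it contains a genuine and decisive gap, which to your credit you identify yourself. The upper bound $\limsup_{q\to\infty}\rho_3(q)\le\frac12$ is indeed an immediate consequence of Theorem~\ref{thm:ratio}, and your reduction of the lower bound to ordinary simple classes with $(s,t,u)\equiv(0,0,1)$ or $(1,1,1)\pmod 2$ is correct (in particular, you are right that the $(1,1,0)$ classes of Conjecture~\ref{conj:rest_mod_2} are non-ordinary and hence asymptotically negligible, so that conjecture is not an obstacle here). But the entire content of the conjecture is the matching lower bound: that asymptotically \emph{every} un-obstructed class is realized by a hyperelliptic Jacobian. The counting heuristic ($\Theta(q^5)$ curves mapping to $\Theta(q^3)$ candidate classes, hence average multiplicity $\Theta(q^2)$) proves nothing about the size of the image --- the map could a priori concentrate on an $o(q^3)$ subset --- and neither of the two routes you sketch (horizontal equidistribution of rescaled Weil coefficients over the hyperelliptic locus at fixed $q$, or a fiber-counting/Lang--Weil argument for the characteristic-2 Torelli map) is carried out. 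These are precisely the open problems; nothing in the paper (nor in the cited literature) supplies them. The proposal is therefore an accurate framing of why the statement is plausible and what a proof would require, but it is not a proof, and it should not be presented as one.
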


This is analogous to the claim made in \cite[\S 5]{costa2020} for odd characteristic finite fields. We include both a re-generated graph to support their claim with additional data that has recently become available, in Figure~\ref{fig:odd_hyp_counts}, and an analogous graph for even $q$ to support our own claim in Figure~\ref{fig:even_hyp_counts}.

\begin{figure}[h]
    \centering
    \includegraphics[width=0.5\linewidth]{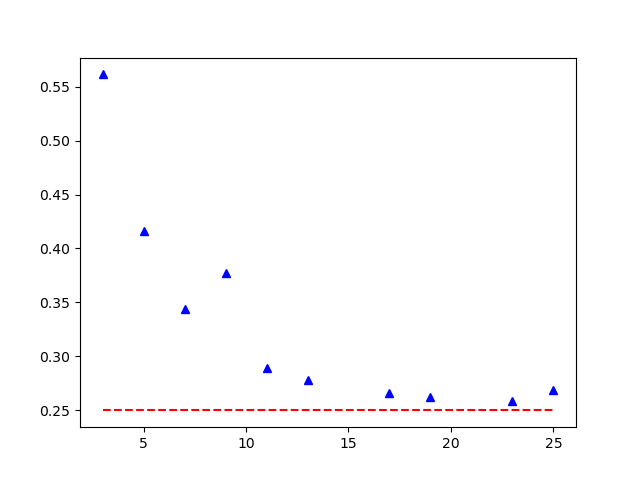}
    \caption{Ratio of isogeny classes that do not contain a Jacobian of a hyperelliptic curve versus field cardinality (odd), plotted against a conjectured asymptote of 0.25.}
    \label{fig:odd_hyp_counts}
\end{figure}

\begin{figure}[h]
    \centering
    \includegraphics[width=0.5\linewidth]{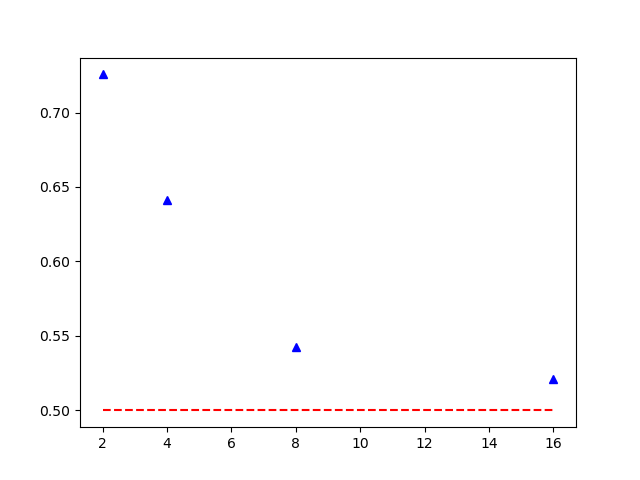}
    \caption{Ratio of isogeny classes that do not contain a Jacobian of a hyperelliptic curve versus field cardinality (even), plotted against a conjectured asymptote of 0.5.}
    \label{fig:even_hyp_counts}
\end{figure}

\begin{figure}[h] \label{fig:genus_g}
    \centering
    \includegraphics[width=0.5\linewidth]{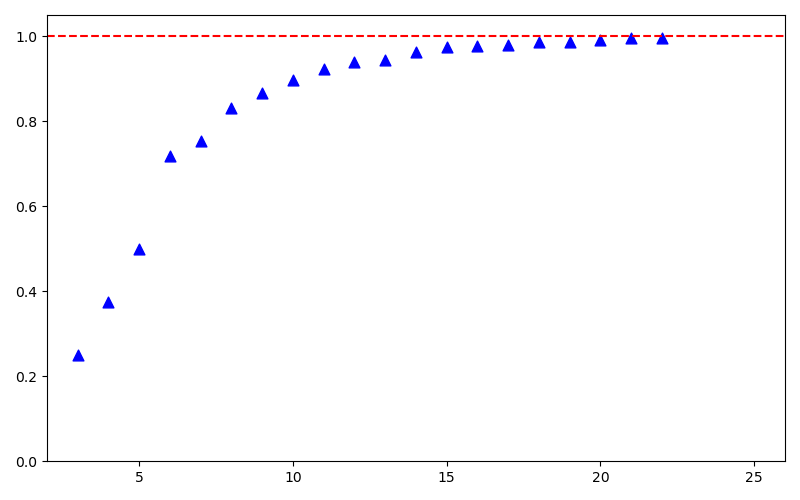}
    \caption{Lower bounds on the proportion of modulo 2 residues which prove a degree $2g$ Weil polynomial not to be the Frobenius polynomial of a genus $g$ hyperelliptic Jacobian, versus $g$, plotted against conjectured asymptote of $100\%$.}
\end{figure}

\section{Generating Hyperelliptics}\label{sec:algorithm} 

In this section we present a new algorithm for enumerating hyperelliptic curves over a finite field of characteristic 2, and discuss the runtime asymptotics. Recall that a genus $g$ hyperelliptic curve over $\F_{2^n}$ takes the form: $y^2 + v(x)y = u(x)$, where $2g+1 \leq\max(\deg(u(x)), 2\deg(v(x)))\leq2g+2$. Note also that we can always assume $v(x)$ to be monic by a simple change of coordinates. The naive approach to enumeration is to list all candidates for $u(x)$ and $v(x)$, and to verify the following criteria for each. Due to \citeauthor{xarles2020censusgenus4curves}~\cite{xarles2020censusgenus4curves}, to verify that $v(x)$ and $u(x)$ define a valid hyperelliptic curve, we only need to check the following criteria:

\begin{lemma}
    Consider the equation $y^2 + v(x)y = u(x)$ where $2g+1 \leq\max(\deg(u(x)), 2\deg(v(x)))\leq2g+2$, $v(x)$ is monic, and $u(x)$ is non-constant. This defines a hyperelliptic curve of genus $g$ over $\F_{2^n}$ if and only if $\gcd(v(x), u'(x)^2+v'(x)^2u(x))=1$ and either $deg(v(x)) = g+1$ or $a_{2g+1}^2 \neq a_{2g+2}b_g^2$, where $u(x) = \sum_{i=0}^{2g+2} a_i x^i$ and $v(x) = \sum_{i=0}^{g+1} b_i x^i$.
\end{lemma}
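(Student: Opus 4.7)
The plan is to reduce the question to smoothness of the projective model in two standard affine charts — the finite affine chart in $(x,y)$ and the chart at infinity obtained by inverting $x$ — and to invoke the standard fact (implicit in the cited construction of Xarles) that a smooth projective curve arising from an equation of the given form, under the stated bound on $\max(\deg u, 2\deg v)$, is hyperelliptic of genus exactly $g$. Each of the two conditions in the statement is then precisely what rules out singularities on one of the two charts.

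For the affine chart I would set $F(x,y) = y^2 + v(x)y - u(x)$. In characteristic $2$ the partial derivatives are $\partial F/\partial y = v(x)$ and $\partial F/\partial x = v'(x)y - u'(x)$. A singular point $(x_0, y_0)$ therefore forces $v(x_0) = 0$; then $F(x_0, y_0) = 0$ gives $y_0^2 = u(x_0)$, and the second partial vanishing squares to $v'(x_0)^2 u(x_0) = u'(x_0)^2$. Conversely, any $x_0 \in \overline{\F_q}$ satisfying both of these admits the singular point $y_0 = \sqrt{u(x_0)}$. Thus affine smoothness is equivalent to $\gcd(v(x), u'(x)^2 + v'(x)^2 u(x)) = 1$ in $\F_q[x]$.

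For the chart at infinity I would apply the substitution $x = 1/t$, $y = w/t^{g+1}$ and clear denominators, obtaining $w^2 + \tilde v(t) w = \tilde u(t)$ with $\tilde v(t) = t^{g+1} v(1/t)$ and $\tilde u(t) = t^{2g+2} u(1/t)$. A direct expansion yields $\tilde v(0) = b_{g+1}$, $\tilde v'(0) = b_g$, $\tilde u(0) = a_{2g+2}$, $\tilde u'(0) = a_{2g+1}$. If $\deg v = g+1$, monicity forces $\tilde v(0) = 1$, so the $w$-partial is nonvanishing at every point above $t = 0$ and smoothness at infinity is automatic. If instead $\deg v < g+1$, then $\tilde v(0) = 0$, leaving a single point above $t = 0$ with $w^2 = a_{2g+2}$; there the $t$-partial $\tilde v'(t) w - \tilde u'(t)$ equals $b_g w - a_{2g+1}$, which is nonzero iff $a_{2g+1}^2 \neq a_{2g+2} b_g^2$.

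The main obstacle will be treating the chart at infinity uniformly across the two subcases $\deg u = 2g+2$ and $\deg u = 2g+1$ (both allowed by the degree bound when $\deg v < g+1$, with $a_{2g+2} = 0$ in the latter reducing the condition to $a_{2g+1} \neq 0$), and in confirming that the weighted compactification underlying the coordinate change $y = w/t^{g+1}$ is the correct genus-$g$ normalization rather than a partial one. Once the chart geometry is in place, assembling the two local smoothness conditions gives the claimed equivalence.
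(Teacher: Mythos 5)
Your proposal is correct, and it is essentially the proof of the cited source: the paper itself gives no argument for this lemma (it simply cites Xarles), and Xarles's proof is exactly this two-chart smoothness computation, with the $\gcd$ condition controlling the finite chart and the $a_{2g+1}^2 \neq a_{2g+2}b_g^2$ condition controlling the point(s) over $t=0$ when $\deg v < g+1$. The one step you flag as an obstacle is standard: the glued model lives in $\P(1,g+1,1)$ with arithmetic genus $g$, so smoothness forces geometric genus $g$ (and also connectedness/irreducibility, since a factorization $u = p^2 + pv$ would produce an intersection point of the two components that your chart computations detect as a singularity).
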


The brute-force enumeration considers roughly $\mathcal{O}(q^{3g+4})$ candidate pairs, where $q=2^n$. This is feasible only for $q=2$ and small genera, which motivates an optimization of this approach.

\citeauthor{xarles2020censusgenus4curves} provides the following two isomorphism invariants for hyperelliptic curves over $\F_2$, which are proven in \cite{xarles2020censusgenus4curves}. We present stronger results that hold true for all finite fields of characteristic 2, and we utilize these invariants in Algorithm ~\ref{alg:algorithm} to collapse isomorphism classes of curves. First we define the $m$-action of $\GL_2(\F_{q^n})$ on the set $R_m = \F_{2^n}[x]/(x^{m+1})$ of polynomials over $\F_{2^n}$ of degree at most $m$. Let $\psi_m(A)(f(x)) = (cx+d)^m f(\frac{ax+b}{cx+d})$ where $A = \begin{pmatrix}
    a & b \\
    c & d \\
\end{pmatrix} \in \GL_2(\F_{q^n})$ and $ v(x) \in R$. Note that this defines a group action applied on the right. 

\begin{lemma}\label{lemma:partial_action}
    Let $C_1$ and $C_2$ be isomorphic hyperelliptic curves of genus $g$ over $\F_{2^n}$ defined by $y^2 + v_i(x)y=u_i(x)$, satisfying $\max(2\deg(v_i),\deg(u_i)) = 2g+1 \text{ or } 2g+2$. There exists a matrix $A \in \GL_2(\F_{q^n})$, and a scalar $\lambda \in \F_{2^n}^\times$ satisfying $v_2(x)=\lambda\psi_{g+1}(A)(v_1(x)).$
\end{lemma}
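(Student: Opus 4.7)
My plan has three steps. First, descent: since the hyperelliptic involution $\iota_i$ is central in the subgroup of $\mathrm{Aut}(C_i)$ preserving the degree-$2$ cover $\pi_i\colon C_i \to \mathbb{P}^1$, the isomorphism $\phi\colon C_1 \to C_2$ must commute with the involutions, and therefore descends to an isomorphism $\bar\phi \in \PGL_2(\F_{q^n})$ of the $\mathbb{P}^1$-quotients. I would pick a lift $A_0 = \begin{pmatrix} a & b \\ c & d\end{pmatrix} \in \GL_2(\F_{q^n})$, so $\phi^*x_2 = (ax_1+b)/(cx_1+d)$.

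Second, transport the model of $C_1$ into the $x_2$-coordinate. Substituting $x_1 = \bar\phi^{-1}(x_2)$ in $y_1^2 + v_1(x_1)y_1 = u_1(x_1)$ and setting $Z$ equal to $y_1$ rescaled by the appropriate power of the Möbius denominator, the equation becomes
\[
Z^2 + V(x_2)\,Z \;=\; U(x_2),
\]
where $V$ is a nonzero scalar multiple of $\psi_{g+1}(A_0^{-1})(v_1)$ (and $U$ is similarly expressed in terms of $\psi_{2g+2}(A_0^{-1})(u_1)$). This is another valid hyperelliptic presentation of $C_1$, still satisfying $\max(2\deg V, \deg U) \in \{2g+1,2g+2\}$.

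Third, I now have two hyperelliptic presentations of the curve $C_1 \cong C_2$ in the same $x_2$-coordinate: the transported one and the original $y_2^2 + v_2(x_2)y_2 = u_2(x_2)$. Identifying the two curves via $\phi$, both $Z$ and $y_2$ lie in the common function field and generate the degree-$2$ extension over $\F_{q^n}(x_2)$. Hence $Z = \alpha(x_2)\,y_2 + \beta(x_2)$ for some $\alpha \in \F_{q^n}(x_2)^\times$ and $\beta \in \F_{q^n}(x_2)$. Taking the trace under the hyperelliptic involution yields $V = \alpha v_2$, and matching the constant part gives $U = \alpha^2 u_2 + \alpha\beta v_2 + \beta^2$.

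The heart of the proof — and where I expect the main obstacle — is showing $\alpha \in \F_{q^n}^\times$. Writing $\alpha = p/q$ in lowest terms, the identity $\alpha v_2 \in \F_{q^n}[x_2]$ forces $q \mid v_2$, while $\alpha^2 u_2 \in \F_{q^n}[x_2]$ forces $q^2 \mid u_2$. A root $\gamma$ of $q$ would then be a common root of $v_2$ and $u_2$ with $u_2$ vanishing to order at least $2$; but this contradicts the smoothness condition $\gcd(v_2, u_2'^2 + v_2'^2 u_2) = 1$, since $u_2'(\gamma) = 0$ would also vanish there. Hence $q$ is constant and $\alpha$ is a polynomial. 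A case analysis on the hypothesis $\max(2\deg v_2,\deg u_2) \in \{2g+1,2g+2\}$, combined with the bounds $\deg\alpha + \deg v_2 \le g+1$ and $2\deg\alpha + \deg u_2 \le 2g+2$, forces $\deg \alpha \le 0$ in every case. Therefore $\alpha \in \F_{q^n}^\times$, and absorbing this together with the scalar from Step 2 into a single $\lambda \in \F_{q^n}^\times$ yields $v_2 = \lambda\,\psi_{g+1}(A_0^{-1})(v_1)$, proving the lemma with $A = A_0^{-1}$.
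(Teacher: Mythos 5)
Your route is genuinely different from the paper's: the paper proves this lemma by citing Liu's normal form for isomorphisms of hyperelliptic Weierstrass models (\cite[Cor.~7.4.33]{LIU}), which asserts outright that any isomorphism is $(x,y)\mapsto\bigl(\tfrac{ax+b}{cx+d},\tfrac{r(x)+\lambda y}{(cx+d)^{g+1}}\bigr)$, and then reads off $v_2=\lambda\psi_{g+1}(A)(v_1)$ by a one-line substitution. You are instead re-deriving that normal form from scratch (descent of $\phi$ to $\PGL_2$ via uniqueness of the hyperelliptic involution, then comparison of two Artin--Schreier generators of the same quadratic function-field extension). That is a legitimate and more self-contained strategy, and Steps 1--2 and the trace/norm identities $V=\alpha v_2$, $U=\alpha^2u_2+\alpha\beta v_2+\beta^2$ are correct.

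However, the crux of Step 3 has a genuine gap. You assert ``$\alpha^2u_2\in\F_{q^n}[x_2]$'' as if it were available, but what you actually know is only that the \emph{sum} $\alpha^2u_2+\alpha\beta v_2+\beta^2=U$ is a polynomial; since you have not controlled the poles of $\beta$, you cannot isolate $\alpha^2u_2$ and conclude $q^2\mid u_2$. The same omission undermines the final degree count: without a bound on $\deg\beta$, the relation $\alpha^2u_2=U+\beta V+\beta^2$ does not force $\deg\alpha\le 0$ (a large $\deg\beta$ could balance a large $\deg\alpha$). The clean repair is to observe that both $Z$ and $y_2$ are integral over $\F_{q^n}[x_2]$ and that the smoothness condition $\gcd(v_2,\,u_2'^2+v_2'^2u_2)=1$ makes $\F_{q^n}[x_2][y_2]/(y_2^2+v_2y_2+u_2)$ normal, hence equal to the integral closure of $\F_{q^n}[x_2]$ with basis $\{1,y_2\}$; this yields $\alpha,\beta\in\F_{q^n}[x_2]$ at once. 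Running the same argument on the inverse change of generator, $y_2=\alpha^{-1}Z+\alpha^{-1}\beta$ against the (also smooth) model $Z^2+VZ=U$, gives $\alpha^{-1}\in\F_{q^n}[x_2]$ as well, so $\alpha$ is a nonzero constant --- no case analysis on degrees is needed. With that substitution your proof closes; alternatively, the paper's citation of Liu short-circuits the entire Step 3.
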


\begin{proof}
    Due to Liu~\cite[Corr. 7.4.33]{LIU}, we have the following result: two isomorphic genus $g$ hyperelliptic curves over any field $k$ presented as above are related by the exchange of variables $(x,y) \mapsto (\frac{ax+b}{cx+d}, \frac{r(x)+\lambda y}{(cx+d)^{g+1}})$ according to some $A = \begin{pmatrix}
    a & b \\
    c & d \\
\end{pmatrix} \in \GL_2(k)$, $\lambda \in k^\times $, and $r(t) \in k[t]/(x^{g+2})$. Computing this change of variables, we get
\begin{equation*}
\begin{multlined}
    y^2 + \lambda^{-1}(cx+d)^{g+1} v_1 \left(\frac{ax+b}{cx+d}\right) y = \\
    \lambda^{-2}((c+d)^{2g+2}u_1 \left(\frac{ax+b}{cx+d}\right) +r(x)(cx+d)^{g+1}v_1\left(\frac{ax+b}{cx+d}\right)+r^2(x)).
\end{multlined}
\end{equation*}
Replacing $\lambda$ with $\lambda^{-1}$ we observe that the left hand side is $y^2+\lambda\psi_{g+1}(A)(v_1(x))$, while the right hand side defines a valid $u_2(x)$, giving us the desired result. 
\end{proof}

\begin{remark}\label{rmk:monic}
    Note that $\psi_m(\lambda A) = \lambda^m\psi_m(A)$, and, in general, nonzero elements are not expected to have $m$-th roots, so we cannot drop the scalar in the above lemma. However, we note that when $\gcd(g+1, 2^n-1) = 1$, every element has a $(g+1)$-th root. Thus, for any matrix $A \in \GL_2(k)$, we can pick a representative $A' \in \PGL_2(k) = \GL_2(k)/k^\times$ such that $\lambda = 1$. This allows us to replace $\GL_2(k)$ with $\PGL_2(k)$ to define an action on monic polynomials. This is why we restrict our algorithm to cases when $\gcd(g+1, 2^n-1) = 1$ (in particular, this is always the case for $g = 3$). 
\end{remark}

\begin{lemma}\label{lemma:full_action}
    Let $C_1$ and $C_2$ be isomorphic hyperelliptic curves of genus $g$, where $\gcd(g+1, 2^n-1)=1$, over $\F_{2^n}$ defined by $y^2 + v(x)y=u_i(x)$, satisfying $\max(2\deg(v(x)),\deg(u_i)) = 2g+1 \text{ or } 2g+2$. There exists a matrix $A \in \Stab_{v(x)} \subseteq \GL_2(\F_{q^n})$ and $r(t) \in \F_{2^n}[t]/(t^{g+2})$ such that  $u_2(x)=\psi_{2g+2}(A)(u_1(x) + r^2(x) + v(x)r(x))$, where $\Stab_{v(x)}$ is the stabilizer of $v(x)$ under the action of $\GL_2(\F_{2^n})$.
\end{lemma}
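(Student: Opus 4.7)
The plan is to start from Lemma~\ref{lemma:partial_action}, which is an application of Liu's theorem~\cite[Cor.~7.4.33]{LIU}, and then use the hypothesis $\gcd(g+1,2^n-1)=1$ to absorb the scalar $\lambda$ into the matrix, finally repackaging the error term via the $(g+1)$-action.

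Concretely, Liu's theorem yields $A \in \GL_2(\F_{2^n})$, $\lambda \in \F_{2^n}^\times$, and $r(x) \in \F_{2^n}[x]/(x^{g+2})$ through which $C_1$ and $C_2$ are related. Redoing the computation from the proof of Lemma~\ref{lemma:partial_action}, clearing denominators, and using characteristic $2$ to write $(r+\lambda y)^2 = r^2 + \lambda^2 y^2$, gives
\begin{align*}
v_2(x) &= \lambda^{-1}\psi_{g+1}(A)(v_1(x)),\\
u_2(x) &= \lambda^{-2}\bigl(\psi_{2g+2}(A)(u_1(x)) + r(x)^2 + \psi_{g+1}(A)(v_1(x))\,r(x)\bigr).
\end{align*}
In the present setting $v_1=v_2=v$, so the first identity forces $\psi_{g+1}(A)(v(x)) = \lambda\, v(x)$.

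Now I would apply the rescaling trick from Remark~\ref{rmk:monic}. Since $\gcd(g+1,2^n-1)=1$, I choose $\mu\in\F_{2^n}^\times$ with $\mu^{g+1}=\lambda^{-1}$ and set $A'=\mu A$. From $\psi_m(\mu A)=\mu^m\psi_m(A)$, one immediately checks $\psi_{g+1}(A')(v(x))=v(x)$, so $A'\in\Stab_{v(x)}$, and $\psi_{2g+2}(A')(u_1)=\lambda^{-2}\psi_{2g+2}(A)(u_1)$. Substituting into the formula for $u_2$ and setting $\tilde{r}(x)=\lambda^{-1}r(x)$ causes every power of $\lambda$ to cancel, leaving
\[
u_2(x) = \psi_{2g+2}(A')(u_1(x)) + \tilde{r}(x)^2 + v(x)\,\tilde{r}(x).
\]

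To match the exact form in the statement, I would pull $\tilde{r}$ back through the action. Using the multiplicativity $\psi_{2g+2}(A')(fg)=\psi_{g+1}(A')(f)\cdot\psi_{g+1}(A')(g)$ for $f,g\in R_{g+1}$, together with $A'\in\Stab_{v(x)}$, setting $s(x)=\psi_{g+1}((A')^{-1})(\tilde{r}(x))\in R_{g+1}$ gives $\psi_{2g+2}(A')(s^2)=\tilde{r}^2$ and $\psi_{2g+2}(A')(v\,s)=v\,\tilde{r}$, so
\[
u_2(x) = \psi_{2g+2}(A')\bigl(u_1(x) + s(x)^2 + v(x)\,s(x)\bigr),
\]
which is the asserted identity with matrix $A' \in \Stab_{v(x)}$ and polynomial $r=s$. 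The only genuinely nontrivial step is the scalar absorption: the coprimality hypothesis $\gcd(g+1,2^n-1)=1$ is exactly what allows one to pass from $\GL_2$ acting on $v$ ``up to a scalar'' to an honest stabilizer, after which everything is bookkeeping via the homogeneity and multiplicativity of the $\psi_m$.
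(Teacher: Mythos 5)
Your proposal is correct and follows essentially the same route as the paper: apply Liu's change-of-variables formula via Lemma~\ref{lemma:partial_action}, use $\gcd(g+1,2^n-1)=1$ to rescale $A$ into the honest stabilizer of $v(x)$, and then transport $r$ through the action using the multiplicativity $\psi_{2g+2}(A)(fg)=\psi_{g+1}(A)(f)\psi_{g+1}(A)(g)$. The paper's own proof is a one-line deferral to the preceding computation and Remark~\ref{rmk:monic}; you have simply made the scalar cancellation and the substitution $s=\psi_{g+1}((A')^{-1})(\tilde r)$ explicit, both of which check out.
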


\begin{proof}
    The proof follows immediately from the computation in the previous proof, where we now note that $\psi_{g+1}(A)(v)=v$. As in the above remark, in order to ignore the scalars in this expression, we require $\gcd(g+1, 2^n-1)=1$ so that $(g+1)$-th roots exists.
\end{proof}

Utilizing these results, we make significant improvements to the best existing enumeration algorithm \cite{huang2024censusgenus6curves} due to \citeauthor{huang2024censusgenus6curves}. The enumeration of all hyperelliptics $y^2 + v(x)y = u(x)$ is computed in two steps. First, compute all possible polynomials $v(x)$, then, for each $v(x)$, compute all possible $u(x)$ modulo the relations stated above, and for each pair $v(x), u(x)$ check whether the corresponding equation is hyperelliptic. The current algorithm due to Huang et al. runs in $\Omega(2^{(3g+3)n})$ time for each computed $v(x)$, and our algorithm improves this to overall worst-case $\mathcal{O}(\text{poly}(n)2^{(2g-1)n})$ time. Additionally, the existing algorithm given in \cite{huang2024censusgenus6curves} enumerates only over $\F_2$, and we provide the necessary generalizations to arbitrary genera and almost all extensions of $\F_2$.

 \subsection*{Description of Algorithm}
 
 \begin{algorithmic}[1] \label{alg:algorithm}
     \Require Genus $g$, degree $n$.
     \Ensure $\gcd(g+1, 2^n - 1) = 1$.
     \State Define $F = \F_{2^n}$ with generator $\alpha$, $R=F[x]$, $S=R/(x^{2g+3})$, $G = \GL_2(\F_{q^n})$.
     \State Define the set $X$ of monic polynomials in $R$ with degree $\leq g+1$.
     \State Compute the orbits of $X$ under the action of $G$, where we require the result of the action to be monic (see Remark~\ref{rmk:monic} -- this is really an action of $\PGL_2(\F_{q^n})$).
     \State Choose one representative from each orbit, call this set $P_{set}$.
     \State Initialize $\texttt{Results}$, a hash table where keys will be polynomials $v(x)$ and values will be lists of polynomials $u(x)$.
     \State Let $V=\text{Span}_{\F_2}{\{\alpha^j x^i \mid j \in \{0\ldots n-1\}, i \in \{0\ldots 2g+2\}\}}$ (the $\F_2$-vector space equivalent of $S$). 
     \For{$v(x) \in P_{\text{set}}$}
        \State Let $U=\text{Span}_{\F_2}\{\alpha^jx^iv(x)+\alpha^{2j}x^{2i} \mid j\in\{0\ldots n-1t\}, i \in \{0 \ldots g+1\} \}$, (the $\F_2$-vector space of elements $r(x)v(x)+r(x)^2$).
        \State Compute the quotient space $Q = V/U$ and $\pi:V\rightarrow V/U$, the canonical map.
        \State Compute $\textbf{stab}_{v(x)} = \{A \in G \mid \psi_{g+1}(A)(v(x))=v(x)\}$, the $G$-stabilizer of $v(x).$ 
        \State Let $\texttt{initial}=\{u(x) \in V \text{ for each } [u(x)] \in Q\} $ be any set of coset representatives.
        \While{\texttt{initial}}
            \State Pop a candidate $u(x)$ from \texttt{initial}.
            \State Compute $\textbf{deg} = \max(2\deg(v(x)), \deg(u(x)))$, the degree of the curve.
            \If{not $\textbf{deg} = 2g+1 \text{ or } 2g+2$} Continue \EndIf
            \State Compute the orbit $\texttt{orb}$ of $u(x)$ under the action of $\textbf{stab}_{v(x)}$.
            \State Remove all elements of all cosets of elements of $\texttt{orb}$ from $\texttt{initial}$.
            \State Add $u(x)$ to the $\texttt{Results}[v(x)]$.
            \EndWhile
        \EndFor
        \State Populate \texttt{Hyperelliptics} with all pairs $v(x), u(x)$ in \texttt{Results} that define a valid hyperelliptic.
        \State Return \texttt{Hyperelliptics}     
 \end{algorithmic}

For an implementation of the algorithm see \url{https://github.com/bmatvey/char_2_hyperelliptics}.


\begin{theorem}
    This algorithm produces exactly the isomorphism classes of genus $g$ hyperelliptic curves over $\F_{2^n}$, where $\gcd(g+1, 2^n-1)=1$.
\end{theorem}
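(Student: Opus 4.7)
The plan is to establish three properties of the algorithm: \emph{completeness} (every isomorphism class of genus $g$ hyperelliptic curves appears in the output), \emph{soundness} (no two outputs define isomorphic curves), and \emph{validity} (every output indeed defines a genus $g$ hyperelliptic curve). The last is immediate from the filtering step that checks the criteria of the lemma stated just before the algorithm, so the real content lies in completeness and soundness, which together amount to showing that the algorithm chooses exactly one representative per equivalence class under the isomorphism relation described in Lemmas~\ref{lemma:partial_action} and~\ref{lemma:full_action}.

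For completeness, I would first fix an isomorphism class of genus $g$ hyperelliptic curves, represented by some $y^2 + v(x)y = u(x)$ with $v$ monic. By Lemma~\ref{lemma:partial_action} combined with Remark~\ref{rmk:monic} (which uses the hypothesis $\gcd(g+1,2^n-1)=1$ to eliminate the scalar), the polynomial $v$ is equivalent to a unique element of $P_{\mathrm{set}}$ under the $\psi_{g+1}$-action restricted to maps between monic polynomials. So the outer loop visits the correct $v(x)$. Once $v$ is fixed, Lemma~\ref{lemma:full_action} tells us that the isomorphic curves with this same $v$ are precisely those whose $u$-polynomial differs by an element of $\psi_{2g+2}(\Stab_v)$ composed with a shift of the form $r(x)^2 + v(x)r(x)$. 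The algorithm sweeps through coset representatives of $V/U$ (absorbing the shifts $r^2 + vr$) and, on each representative, deletes the entire $\Stab_v$-orbit of that coset. Therefore each combined equivalence class is eventually popped from \texttt{initial} and recorded exactly once, proving completeness.

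For soundness, I would argue contrapositively: suppose two pairs $(v_1,u_1),(v_2,u_2)$ recorded by the algorithm define isomorphic curves. By Lemma~\ref{lemma:partial_action} and the fact that $P_{\mathrm{set}}$ contains one representative per $\psi_{g+1}$-orbit, we must have $v_1=v_2=v$. Then Lemma~\ref{lemma:full_action} yields $A \in \Stab_v$ and $r$ with $u_2 = \psi_{2g+2}(A)(u_1 + r^2 + vr)$. Hence $u_1+U$ and $u_2+U$ lie in the same $\Stab_v$-orbit of $V/U$; but by construction the algorithm deletes every element of every coset in such an orbit as soon as one representative is popped, so at most one of $u_1,u_2$ could have been appended to $\texttt{Results}[v]$, a contradiction. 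Validity then follows because the final step explicitly discards any pair failing the hyperelliptic criterion of the lemma preceding the algorithm.

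The main obstacle I expect is handling the interaction between the two quotients cleanly: the shift relation $u \sim u + r^2 + vr$ is an additive $\F_2$-linear relation (giving the quotient $Q = V/U$), whereas the stabilizer relation is a nonlinear group action on $V$, and one has to verify that the algorithm's strategy of picking coset representatives and then removing entire cosets of the $\Stab_v$-orbit actually realizes the orbit set for the \emph{combined} equivalence. This hinges on the observation that $\psi_{2g+2}(A)$ preserves $U$ for $A \in \Stab_v$ (since $\psi_{g+1}(A)v = v$ and $\psi$ is compatible with the substitution $r \mapsto \psi_{g+1}(A)r$), which descends the $\Stab_v$-action to a well-defined action on $V/U$ whose orbits partition the isomorphism classes. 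Verifying this compatibility is the one nontrivial algebraic check; everything else is bookkeeping.
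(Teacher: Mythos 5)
Your proposal is correct and follows essentially the same route as the paper: reduce to one $\PGL_2$-orbit representative for $v$, then show the combined relation ($\Stab_v$-action plus shifts $r^2+vr$) is realized by the $\Stab_v$-action on the quotient $V/U$, with the key check being that $\psi_{2g+2}(A)$ preserves $U$ for $A\in\Stab_v$ — exactly the compatibility the paper verifies by explicit computation. The only cosmetic difference is your completeness/soundness/validity framing versus the paper's direct "at most one and at least one representative per class" argument.
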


\begin{proof}
    It is clear that lines \textbf{1-4} correctly produce all possible monic $v(x)$ and chooses representatives modulo the relations discussed above. In particular, we pick exactly one representative from each $\GL_2(\F_{q^n})$ orbit on monic polynomials of degree at most $g+1$, so we can proceed to the next step in the algorithm which computes all possible valid hyperelliptics with a given $v(x)$. 

    We show now that the for each $v(x)$, the algorithm finds all isomorphism classes of curves involving $v(x)$, and that from each isomorphism class, exactly one representative is chosen. 
    
    Pick a $v(x)$ and start with the relation \[\psi_{2g+2}(A)(u_2(x) + r_2(x)^2+v(x)r_2(x))=\psi_{2g+2}(B)(u_1(x) + r_1(x)^2+v(x)r_1(x)),\] where $A,B \in \Stab_{v(x)}$. Using invertibility and linearity of $\psi$, we obtain 
    \begin{equation*}
    \begin{split}
        u_2(x) &=\psi_{2g+2}(BA^{-1})(u_1(x)+r_1(x)^2+v(x)r_1(x))+r_2(x)^2+v(x)r_2(x) \\
        & = \begin{multlined}[t][14cm]\psi_{2g+2}(BA^{-1})[u_1(x)+(r_1(x)+\psi_{g+1}(AB^{-1})(r_2(x)))^2 + \\ 
        \psi_{g+1}(AB^{-1})(v(x))\psi_{g+1}(AB^{-1})(r_2(x))+v(x)r_1(x)] 
        \end{multlined} \\
        &= \psi_{2g + 2}(BA^{-1})[u_1(x) + (r_1(x) + \psi_{g+1}(AB^{-1})(r_2(x)))^2 + (\psi_{g+1}(AB^{-1})(r_2(x)) + r_1(x))v(x)]
    \end{split}
    \end{equation*}
    Where we use that $AB^{-1} \in \Stab_{v(x)}$ and $\psi_{2m}(fg)=\psi_m(f)\psi_m(g)$, so rewriting $BA^{-1}=C$ and $r_1(x)+\psi_{g+1}(AB^{-1})(r_2(x))=r_*(x)$, we obtain \[u_2(x)=\psi_{2g+2}(C)(u_1(x)+r(x)^2_*+v(x)r(x)^2_*)\] which is exactly the condition in the Lemma~\ref{lemma:full_action}. 
    
    Consider the equivalence relation on $\F_{2^n}[x]/(x^{2g+3})$ defined by $f(x)\sim g(x)$ if $f(x) = g(x)+r(x)^2+v(x)r(x)$ for some $r(x)\in\F_{2^n}[x]/(x^{g+2})$. The previous paragraph shows that $u_1(x)\sim u_2(x)$ if and only if they describe isomorphic hyperelliptics with $v(x)$. 
    
    Defining the $\F_2$-vector spaces $V$ and $U$ as in the algorithm, it is clear that the quotient space $Q=V/U = (\F_{2^n}[x]/(x^{2g+3}))/\sim$. We have seen that $GL_2(\F_{2^n})$ yields a well-defined action on $V$, and we show now that $\Stab_{v(x)}$ preserves $U$, so that $\Stab_{v(x)}$ yields a well-defined action on $V/U$. To see this, note that \[\psi_{2g+2}(A)(\alpha^jx^i v(x)+\alpha^{2j}x^{2i})=\psi_{g+1}(A)(\alpha^jx^i)v(x)+\psi_{g+1}(A)(\alpha^{2j}x^{2i})\in U\] when $A \in \Stab_{v(x)}$. 
    
    By the above discussion, if two hyperelliptics with the same $v(x)$ are isomorphic, then $\pi(u_1(x))$ and $\pi(u_2(x))$ live in the same $\Stab_{v(x)}$-orbit in $V/U$. This is sufficient to show that the given algorithm chooses at most one representative from each isomorphism class, since for each coset in $V/U$, at most one representative of its $\Stab_{v(x)}$-orbit is chosen and lifted to $V$. Completeness is also clear, since each isomorphism class will be considered at least once.
\end{proof}

The following lemma bounds the size of the output. We use this bound to prove a runtime bound.

\begin{lemma}\label{lemma:counting_us}
    For each $v(x)$, the size of the space $Q = V/U$ is $2^{(g+1)n + 1}$. 
\end{lemma}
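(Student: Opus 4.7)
The plan is to realize $U$ as the image of an explicit $\F_2$-linear map from $\F_{2^n}[x]/(x^{g+2})$ into $V = \F_{2^n}[x]/(x^{2g+3})$ and compute its kernel, so that the quotient dimension follows from rank-nullity.

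First I would observe that as an $\F_2$-vector space, $V$ has dimension $n(2g+3)$, since the given basis $\{\alpha^j x^i\}$ consists of $n(2g+3)$ elements forming a basis. Next, define the map
\[
\phi : \F_{2^n}[x]/(x^{g+2}) \longrightarrow V, \qquad \phi(r(x)) = r(x)v(x) + r(x)^2.
\]
This map is $\F_2$-linear: additivity of $r \mapsto r^2$ in characteristic $2$ ensures $\phi(r_1+r_2) = \phi(r_1) + \phi(r_2)$, and $\phi$ is clearly compatible with the $\F_2$-action. Moreover, $U$ is by definition the $\F_2$-span of $\phi(\alpha^j x^i)$ as $(i,j)$ ranges over a basis of $\F_{2^n}[x]/(x^{g+2})$, so $U = \operatorname{image}(\phi)$.

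The heart of the proof is computing $\ker \phi$. An element $r(x)$ lies in the kernel precisely when $r(x)v(x) + r(x)^2 \equiv 0 \pmod{x^{2g+3}}$. Since $\deg(rv+r^2) \le 2g+2 < 2g+3$, this is an honest equality in $\F_{2^n}[x]$. Factoring, $r(x)(r(x) + v(x)) = 0$, and as $\F_{2^n}[x]$ is an integral domain, either $r(x) = 0$ or $r(x) = v(x)$. Both are valid elements of the domain, as $v(x)$ is monic of degree at most $g+1$ and thus lies in $\F_{2^n}[x]/(x^{g+2})$. Hence $\ker \phi = \{0, v(x)\}$, a one-dimensional $\F_2$-subspace.

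By rank-nullity,
\[
\dim_{\F_2}(U) = \dim_{\F_2}\left(\F_{2^n}[x]/(x^{g+2})\right) - 1 = n(g+2) - 1,
\]
so
\[
\dim_{\F_2}(Q) = \dim_{\F_2}(V) - \dim_{\F_2}(U) = n(2g+3) - n(g+2) + 1 = n(g+1) + 1,
\]
giving $|Q| = 2^{n(g+1)+1}$ as claimed. The only mildly subtle step is recognizing that the kernel is strictly larger than $\{0\}$ — precisely because $v$ itself lies in the domain — which accounts for the $+1$ in the exponent rather than the bare $n(g+1)$ one might naively expect.
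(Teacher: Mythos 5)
Your proof is correct and follows essentially the same route as the paper: both realize $U$ as the image of the $\F_2$-linear map $r \mapsto rv + r^2$, identify the kernel as the one-dimensional space spanned by $v(x)$, and conclude by rank–nullity. Your writeup is if anything slightly more careful, since you note explicitly that the degree bound makes the congruence modulo $x^{2g+3}$ an honest polynomial identity before factoring in the integral domain.
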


\begin{proof}
    Firstly, $V$ has size $2^{(2g+3)n}$. Now, $U$ is defined by the map $\phi: \F_{2^n}[x]/(x^{g+2}) \to U$ given by $\phi(r(x)) = r(x)v(x) + (r(x))^2$. The space $\F_{2^n}[x]/(x^{g+2})$ clearly has dimension $(g+2)n$ over $\F_2$. To find $\ker(\phi)$, we evaluate $r_1(x)v(x) + (r_1(x))^2 = r_2(x)v(x) + (r_2(x))^2$. Simple algebraic manipulation gives us $r_1(x) + r_2(x) = v(x)$ as the only solution with $r_1(x) \neq r_2(x)$, so $\ker(\phi)$ is the one dimensional vector space with $v(x)$ as its basis. Thus, we have $\dim(U) = (g+2)n - 1$, and therefore $V/U = Q$ has size $2^{(g+1)n+1}$. 
\end{proof}

To show an upper bound for runtime, we also need to show a bound on the number of candidates $v(x)$, and analyze the runtime of computing this set. We do this by bounding the size of the stabilizer $\Stab_{\PGL_2(\F_{2^n})}(v(x))$. Henceforth, let $X$ denote the set of monic polynomials of degree at most $g+1$ over $\F_{2^n}$ and $X_d$ denote the set of monic polynomials of degree exactly $d$.

\begin{proposition}\label{prop:stab_size}

Let X be the set of monic polynomials of degree at most \(g+1\) for $g \geq 2$. Then we can bound:
\[\sum_{f \in X} \#\Stab_G(f) = \mathcal{O}(q^{g+1}),\]
where $q = 2^n$. 




\end{proposition}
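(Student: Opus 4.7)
The plan is to identify each monic $f\in X$ of degree $d$ with the $\F_q$-rational effective divisor $D_f:=\operatorname{div}(f)+(g+1-d)\cdot[\infty]$ of degree exactly $g+1$ on $\P^1$, and then bound $\Stab_G(f)$ via the natural M\"obius action of $\PGL_2(\F_q)$ on such divisors. Under this correspondence the $\psi_{g+1}$-action of $\GL_2(\F_q)$ on $X$ descends to the $\PGL_2(\F_q)$-action on divisors, and the central scalars $\lambda I\in\GL_2(\F_q)$ act on $f$ by $\lambda^{g+1}$, so
\[|\Stab_G(f)|\leq \gcd(g+1,q-1)\cdot|\Stab_{\PGL_2(\F_q)}(D_f)|.\]
Since $\gcd(g+1,q-1)\leq g+1=O(1)$, it suffices to bound $\sum_f|\Stab_{\PGL_2(\F_q)}(D_f)|$.

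Next I would case-split on $k:=|\operatorname{supp}(D_f)(\bar{\F}_q)|$. For $k=1$, Galois-rationality forces $D_f=(g+1)P$ with $P\in\P^1(\F_q)$, giving $q+1$ divisors, each stabilized by the Borel subgroup $\Stab_{\PGL_2}(P)$ of order $q(q-1)$; total $O(q^3)$. For $k=2$, the support $\{P,Q\}$ is either a pair of $\F_q$-points (contributing $O(g)\cdot\binom{q+1}{2}=O(q^2)$ divisors after distributing $g+1=m_1+m_2$) or a pair of Frobenius conjugates in $\P^1(\F_{q^2})\setminus\P^1(\F_q)$ (where Galois invariance forces $m_1=m_2$, still $O(q^2)$ divisors). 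In each subcase the stabilizer is contained in the normalizer of the 1-dimensional torus fixing $\{P,Q\}$ pointwise (split or non-split), hence has order at most $2(q+1)$; total $O(q^3)$.

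For $k\geq 3$, a non-identity element of $\PGL_2$ fixes at most two points of $\P^1$, so $\Stab_{\PGL_2(\F_q)}(D_f)$ acts faithfully on the finite set $\operatorname{supp}(D_f)(\bar{\F}_q)$ and has order at most $(g+1)!=O(1)$. Since there are at most $|X|=O(q^{g+1})$ such divisors, this case contributes $O(q^{g+1})$. Combining the three cases yields $\sum_{f\in X}|\Stab_G(f)|=O(q^{g+1})$ once $g\geq 2$, as desired. The main technical obstacle is the careful accounting in Case $k=2$: one must separately handle the split and non-split tori and track which multiplicity patterns survive Galois-descent; everything else is a routine orbit-stabilizer count, absorbing the $g$-dependent constants $(g+1)!$ and $\gcd(g+1,q-1)$ into the implicit $O$-constant.
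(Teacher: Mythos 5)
Your argument is correct, and it takes a genuinely different route from the paper. The paper converts $\sum_{f}\#\Stab_G(f)$ into $\sum_{A}\#\mathrm{fix}_X(A)$ and then works through the four conjugacy-class types of $\GL_2(\F_{2^n})$ one by one, extracting linear conditions on the coefficients $\lambda_i$ of $f$ for each type and each degree $d\in\{g-2,\dots,g+1\}$, with several exceptional low-degree cases ($d\le 2$ for the unipotent classes, $d\in\{3,4,5\}$ for the irreducible classes) handled by hand. You instead bound each stabilizer directly by identifying a monic $f$ of degree $d$ with the effective $\F_q$-rational divisor of degree $g+1$ on $\P^1$ obtained by adding $(g+1-d)[\infty]$, under which the $\psi_{g+1}$-action becomes the M\"obius action on divisors; the key input is then simply that a nontrivial element of $\PGL_2(\bar\F_q)$ fixes at most two points of $\P^1(\bar\F_q)$, so divisors supported on $k\ge 3$ geometric points have stabilizers of order at most $(g+1)!=\mathcal{O}(1)$, while the $k\le 2$ divisors number only $\mathcal{O}(q^2)$ and have stabilizers inside Borel subgroups or torus normalizers of order $\mathcal{O}(q)$, giving $\mathcal{O}(q^3)+\mathcal{O}(q^{g+1})=\mathcal{O}(q^{g+1})$ for $g\ge 2$. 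Your approach is shorter, uniform in $g$ and in the degree of $f$, avoids all of the exceptional cases, and does not need the hypothesis $\gcd(g+1,2^n-1)=1$ (the scalar ambiguity costs only a factor $\gcd(g+1,q-1)\le g+1$); the paper's computation, on the other hand, records the fixed-point count of each conjugacy class explicitly, which is closer to what one would implement to evaluate the Burnside sum exactly. Both arguments feed equally well into Corollary~\ref{cor:burnside}, which only uses the total $\sum_f\#\Stab_G(f)$.
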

\begin{proof}
    First, we note that since we are acting on monic polynomials, we are interested in the group $G = \PGL_2(\F_{2^n})$. 
    
    We compute the expected stabilizer size by using the relation $\sum_{g \in G} \#\text{fix}_{X}(g) = \sum_{f \in X} \#\Stab_G(f)$, where $\text{fix}_X(g)$ is the fixed set of $g$. We bound $\sum_{f \in X}\#\Stab_{\PGL_2(\F_{2^n})}(f)$ by counting the number of matrices $A \in \GL_2(\F_{2^n})$ for which $\Psi_{g+1}(A)(f(x)) = f(x)$. We use the fact that conjugate elements fix the same number of set elements in any group action. There are four types of $\GL_2(\F_{2^n})$ conjugacy classes \cite{COOPER}.

    An important observation is that the contribution of polynomials of degree $d < g-2$ to the sum $\sum_{g \in G} \#\text{fix}_{X}(g)$ is at most $\mathcal{O}(q^3\cdot q^{g-2}) = \mathcal{O}(q^{g+1})$ assuming every such polynomial is fixed by every element of $\PGL_2(\F_{2^n})$. This is within our desired bound, so we do not need to consider polynomials of degree less than $g-2$. 

    Now, consider a generic polynomial $f(x) = x^d + \lambda_{d-1} x^{d-1} + \ldots + \lambda_0 \in X$. We analyze the conjugacy classes one by one:

    \begin{itemize}
        \item[{\rm (\textbf{I})}] Conjugacy classes with representative $A=\begin{pmatrix}
            a & 0 \\
            0 & b \\
        \end{pmatrix}$ with $a\neq b$. Each such conjugacy class has size $\O(q^2)$. We consider three cases:
        \begin{enumerate}
            \item If the degree of $f$ is $g+1$, we have $\Psi_{g+1}(A)(f(x)) = a^dx^d + b a^{d-1}x^{d-1}\lambda_{d-1} + b^2 a^{d-2}x^{d-2}\lambda_{d-2} + \ldots + \lambda_0 b^d$. Since $\gcd(d, 2^n-1) = 1$, we must have $a = 1$. Then, we immediately get $\lambda_{d-1} = 0$, $\lambda_{d-2}=0$, $\lambda_0 = 0$ from the second, third and last terms, respectively, since $b \neq 1$. Thus, this case contributes $\O(q)\times \O(q^2)\times \O(q^{g-2}) = \O(g^{g+1})$ to the total count.
            \item If the degree of $f$ is $g$, we have $\Psi_{g+1}(A)(f(x)) = ba^dx^d + b^2 a^{d-1}x^{d-1}\lambda_{d-1} + b^3 a^{d-2}x^{d-2}\lambda_{d-2} + \ldots + \lambda_0 b^{g+1}$. The first term gives us $b = a^{-g}$, so $a$ determines $b$. Since $a \neq b$, we also have $\lambda_{d-1} = 0$. Finally, if $\lambda_{d-2} \neq 0$ and $\lambda_{d-3} \neq 0$, the corresponding terms give us $a^{2g + 2} = a^{3g+3} = 1$. Together, these would imply that $a^{g+1} = 1$, meaning $a = b =1$, a contradiction. Thus, one of $\lambda_{d-2}$ or $\lambda_{d-3}$ must be 0 for any fixed point $f(x)$. Therefore, this case contributes $\O(q^2)\times \O(q) \times \O(q^{g-2}) = \O(q^{g+1})$ to the total. 
            
            Note this argument fails if $d = g = 2$, as then the $\lambda_{d-3}$ term doesn't exist. In this case, we must have $b = a^{-2}$ for $a \neq 1$ and $\lambda_0 = 0$. Thus, the total contribution is $\O(q^3)$, as desired.
            \item The remaining cases are $g-2 \leq \deg(f) < g$. We have $\Psi_{g+1}(A)(f(x)) = b^{g+1-d}a^dx^d + b^{g+2-d} a^{d-1}x^{d-1}\lambda_{d-1} + b^{g+3-d} a^{d-2}x^{d-2}\lambda_{d-2} + \ldots + \lambda_0 b^{g+1}$. The first term gives us $b^{g+1 -d} = a^{-d}$, and since $d \geq g-2$, this is at most cubic in $b$, so for every $a$ there are $\O(1)$ choices for $b$. Moreover, $a \neq b$ still gives us $\lambda_{d-1} = 0$, so the contribution to the total is $\O(q^2) \times \O(q) \times \O(q^{g-2}) = \O(q^{g+1})$. Note this argument fails is $d = 0$, but if $g = 2$ and $d = 0$, $b = 1$ and $a$ is free, so we also get a $\O(q^3)$ bound, as desired. 
        \end{enumerate}

        \item[{\rm (\textbf{II})}] Conjugacy classes with representative $A=\begin{pmatrix}
            a & 0 \\
            0 & a \\
        \end{pmatrix}$. Each conjugacy class has size 1, and there are $q-1$ such classes. Computing $\psi_{g+1}(A)(f(x)) = a^{g+1}f(x)$, we see that we must have $a = 1$. Then every $f(x)$ is a fixed point. This gives us a total contribution of $\O(q^{g+1})$ for this conjugacy class. 

        \item[{\rm (\textbf{III})}] Conjugacy classes with representative $A=\begin{pmatrix}
            a & 1 \\
            0 & a \\
        \end{pmatrix}$. Each conjugacy class has size $\O(q^2)$, and there are $q-1$ such classes. Computing $\psi_4(A)(f(x)) = a^{g+1-d}(ax+1)^d + a^{g+2-d}\lambda_{d-1}(ax+1)^{d-1} + \ldots + \lambda_0 a^{g+1}$. The $x^d$ term gives us $a^{g+1} = 1$, implying $a = 1$. With this, our requirement is now $f(x) = f(x+1) = (x+1)^d + \lambda_{d-1}(x+1)^{d-1} + \ldots + \lambda_0$. Considering the constant terms, this gives us a relationship $1 + \lambda_{d-1} + \lambda_{d-2} + \ldots + \lambda_1 = 0$, which reduces the dimensionality of the space of possible $f(x)$ by 1. Now, we look at the $x^{d-1}$ and $x^{d-2}$ terms. The first gives $dx^{d-1} + \lambda_{d-1}x^{d-1} = \lambda_{d-1}x^{d-1}$. If $d$ is odd, such an $f(x)$ cannot exist and the contribution is 0. If $d$ is even, we look at the next term: $\binom{d}{2} x^{d-2}+\lambda_{d-1}(d-1)x^{d-2} + \lambda_{d-2}x^{d-2} = \lambda_{d-2}x^{d-2}$. This implies $\binom d2 = \lambda_{d-1}(d-1)$, which determines $\lambda_{d-1}$. This brings down the dimensionality of the space of potential fixed $f(x)$ to at most $g-1$, so the total contribution of this case is $\O(q^2) \times 1 \times \O(q^{g-1}) = \O(q^{g+1})$. Note that this argument doesn't work as written for $d = 0, 1, 2$, but one can check that each of these gives a bound of at most $\O(q^3)$ using an adapted version of the above argument. 

        \item[{\rm (\textbf{IV})}] Conjugacy classes with representative $A=\begin{pmatrix}
            0 & -a_0 \\
            1 & a_1 \\
        \end{pmatrix}$, where $x^2 - a_1x + a_0$ is irreducible. Each conjugacy class has size $\O(q^2)$, and there are $\O(q^2)$ such classes. We have $\Psi_{g+1}(A)f(x) = a_0^d + \lambda_{d-1}(x+a_1)a_0^{d-1} +\ldots+\lambda_0(x+a_1)^{g+1}$. Note that this immediately implies $d = g+1$. This is more conveniently rewritten as $\lambda_0(x+a_1)^d + \lambda_1(x+a_1)^{d-1}a_0 + \lambda_2(x+a_1)^{d-2}a_0^2 + \ldots + a_0^d$. Matching this with $f(x)$, we get the following equations:
        \begin{enumerate}
            \item $\lambda_0 = 1$
            \item $d a_1 + a_0 \lambda_1 = \lambda_{d-1}$
            \item $\binom d2 a_1^2 + \lambda_1(d-1)a_0 a_1 + \lambda_2 a_0^2 = \lambda_{d-2}$
            \item $a_1^d + \lambda_1 a_1^{d-1}a_0 + \ldots + \lambda_{d-1}a_1 a_0^{d-1} + a_0^d = 1$
        \end{enumerate}
        If $g \geq 5$, these clearly give four linearly independent conditions, reducing the number of potential $f(x)$ to $\O(q^{g-3})$. Thus, this case contributes $\O(q^2) \times \O(q^2) \times \O(q^{g-3}) = \O(q^{g+1})$.

        Once more, there are a few low-degree cases that require special consideration. 
        \begin{enumerate}
            \item If $d = 3$, manually solving the above equations shows that the first three equations determine $\lambda_0, \lambda_1, \lambda_2$ in terms of $a_0, a_1$, and the fourth gives a condition on $a_0, a_1$. In particular, once $a_0$ is determined, it becomes a quartic in $a_0$, and thus for every $a_0$, there are $\O(1)$ choices for $a_1$ which have a nonempty set of fixed points. This gives the required bound of $\O(q^3)$. 
            \item If $d = 4$, one can solve the above system manually. It turns out that $a_0, a_1$ uniquely determine $f(x)$ in most cases. There are $\O(1)$ cases when $f(x)$ remains a two parameter family and $\O(q)$ cases when $f(x)$ remains a 1 parameter family, but the sum is always bounded by $\O(q^2)\times \O(q^2) = \O(q^4)$, as desired. 
            \item If $d = 5$, we proceed just as in the $d = 4$ case. Solving the system generally yields a 1-parameter family of $f(x)$, except for $\O(q)$ specific choices of $a_1, a_2$, for which $f(x)$ remains a 2 parameter family. Again, the sum is still bounded by $\O(q^3)\times \O(q^2) = \O(q^5)$, the desired bound. 
        \end{enumerate}
    \end{itemize}

     Since there are $\O(1)$ cases considered above (note that for each $g$, we consider up to 4 distinct degrees $d$), adding together the bounds for all of the terms described above gives us $\sum_{g \in \PGL_2(\F_{2^n})}\#\text{fix}_X(g) =\O(q^{g+1})$, our desired bound.

\end{proof}

\begin{corollary}\label{cor:burnside}
    The number of candidates $v(x)$ over $\F_{2^n}$ is asymptotically $\mathcal{O}(2^{(g-2)n})$.
\end{corollary}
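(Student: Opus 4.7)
The plan is to apply Burnside's lemma (the Cauchy–Frobenius lemma) to the action of $\PGL_2(\F_{2^n})$ on the set $X$ of monic polynomials of degree at most $g+1$. Per Remark~\ref{rmk:monic}, the candidates $v(x)$ produced in lines \textbf{2--4} of the algorithm are precisely one representative from each orbit of this $\PGL_2(\F_{2^n})$-action, so the number of candidates equals the number of orbits $|X/G|$ with $G = \PGL_2(\F_{2^n})$.

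By Burnside, together with the standard orbit-counting identity $\sum_{g \in G}\#\mathrm{fix}_X(g) = \sum_{f \in X}\#\Stab_G(f)$, we have
\[
|X/G| \;=\; \frac{1}{|G|}\sum_{g \in G} \#\mathrm{fix}_X(g) \;=\; \frac{1}{|G|}\sum_{f \in X} \#\Stab_G(f).
\]
Next, I would invoke Proposition~\ref{prop:stab_size}, which gives the key bound
\[
\sum_{f \in X}\#\Stab_G(f) \;=\; \mathcal{O}(q^{g+1}),
\]
where $q = 2^n$. Finally, using the well-known order $|\PGL_2(\F_{2^n})| = q^3 - q = \Theta(q^3)$, I divide through to obtain
\[
|X/G| \;=\; \mathcal{O}\!\left(\frac{q^{g+1}}{q^3}\right) \;=\; \mathcal{O}(q^{g-2}) \;=\; \mathcal{O}(2^{(g-2)n}),
\]
which is the asserted asymptotic bound on the number of candidates $v(x)$.

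All the genuine work has already been carried out in Proposition~\ref{prop:stab_size}; the present corollary is essentially a bookkeeping step that converts the bound on the sum of stabilizers into a bound on the orbit count via Burnside. There is no real obstacle, provided one keeps careful track of the distinction between acting by $\GL_2$ and by $\PGL_2$ (as addressed in Remark~\ref{rmk:monic}, where the hypothesis $\gcd(g+1,2^n-1)=1$ ensures one can always rescale to preserve monicity). A brief sanity check is that the bound $\mathcal{O}(2^{(g-2)n})$ is meaningful only for $g \geq 3$; for $g = 2$ it reads $\mathcal{O}(1)$ asymptotically, which is consistent with the low-degree casework already handled in the proposition.
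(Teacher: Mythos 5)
Your proposal is correct and follows essentially the same route as the paper: both apply Burnside's lemma with the stabilizer-sum bound $\sum_{f\in X}\#\Stab_G(f) = \mathcal{O}(q^{g+1})$ from Proposition~\ref{prop:stab_size} and divide by $|\PGL_2(\F_{2^n})| = \Theta(q^3)$ to get $\mathcal{O}(2^{(g-2)n})$.
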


\begin{proof}
    We apply Burnside's Lemma: $\#(\PGL_2(\F_{q^n})$-$\text{orbits of } X) := |X/G| = \frac{1}{|G|}\sum_{x \in X} |\Stab_G(x)|$. Using Proposition~\ref{prop:stab_size}, we have
    \begin{equation*}
    \begin{split}
    |X/G|
    &= \frac{2^n-1}{(2^{2n}-1)(2^{2n}-2^n)} (\O(2^{n(g+1)}) \\
    &= \mathcal{O}(2^{n(g-2)}).
    \end{split}
    \end{equation*}

\end{proof}

\begin{corollary}\label{cor:num_vs}
    This immediately gives us a bound of $\mathcal{O}(2^{(2g-1)n})$ on the number of isomorphism classes of hyperelliptics over $\F_{2^n}$ for a genus $g$. 
\end{corollary}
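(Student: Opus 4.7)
The plan is to simply combine the two preceding results. By the correctness proof of Algorithm~\ref{alg:algorithm}, every isomorphism class of genus $g$ hyperelliptic curve over $\F_{2^n}$ is represented by at most one pair $(v(x), u(x))$ appearing in $\texttt{Hyperelliptics}$, where $v(x)$ ranges over the orbit representatives in $P_{\text{set}}$ and, for each such $v(x)$, the class $[u(x)] \in Q = V/U$ ranges over the $\Stab_{v(x)}$-orbits. Thus the number of isomorphism classes is bounded by $\sum_{v(x) \in P_{\text{set}}} |Q|$.

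First I would invoke Corollary~\ref{cor:burnside} to bound $|P_{\text{set}}| = \mathcal{O}(2^{(g-2)n})$. Next I would invoke Lemma~\ref{lemma:counting_us}, which gives $|Q| = 2^{(g+1)n + 1}$ independently of the choice of $v(x)$. Multiplying yields the bound
\[
\mathcal{O}(2^{(g-2)n}) \cdot 2^{(g+1)n + 1} = \mathcal{O}(2^{(2g-1)n}),
\]
as claimed. Since we are taking an upper bound, there is no need to quotient $Q$ by the $\Stab_{v(x)}$-action before multiplying.

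There is no real obstacle here: the corollary is an immediate consequence of the two results it cites, and the entire argument amounts to observing that the algorithm's two nested loops have sizes controlled by Corollary~\ref{cor:burnside} and Lemma~\ref{lemma:counting_us} respectively. The only subtlety worth stating explicitly is that every isomorphism class appears at least once in the enumeration (so this is a valid count), while the cosets considered in the inner loop are exactly the equivalence classes from the proof of the algorithm's correctness; both facts were already established there.
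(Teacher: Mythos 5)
Your proof is correct and matches the paper's approach exactly: the paper's own proof is the one-line "combine Lemma~\ref{lemma:counting_us} with Corollary~\ref{cor:burnside}" (the printed text mistakenly cites the corollary itself), and you have simply filled in the multiplication $\mathcal{O}(2^{(g-2)n}) \cdot 2^{(g+1)n+1} = \mathcal{O}(2^{(2g-1)n})$ together with the observation that completeness of the enumeration makes this a valid upper bound.
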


\begin{proof}
    Combining Lemma~\ref{lemma:counting_us} with Corollary~\ref{cor:num_vs}, the result follows.
\end{proof}

\begin{theorem}
    The provided algorithm runs in time $\mathcal{O}(\text{poly}(n)2^{(2g-1)n})$ expected and $\mathcal{O}(\text{poly}(n)2^{(2g+2)n})$ worst-case.
\end{theorem}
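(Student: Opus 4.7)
The plan is to analyze each phase of the algorithm separately, express the total cost as a sum over $v(x) \in P_{\text{set}}$, and apply Lemma~\ref{lemma:counting_us}, Corollary~\ref{cor:burnside}, and Proposition~\ref{prop:stab_size} to control each term. Throughout, arithmetic and linear-algebra operations over $\F_{2^n}$ cost $\text{poly}(n)$ bit operations, and cosets in $Q$ are represented by canonical $\F_2$-vector representatives so that hash-based lookups and removals cost $\text{poly}(n)$ per element.

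First I would handle the preprocessing (lines 1--6). The set $X$ has size $O(2^{(g+1)n})$, and naively applying each $A \in \PGL_2(\F_{2^n})$ (of which there are $O(2^{3n})$) to every element of $X$ to compute orbit representatives costs $O(\text{poly}(n) \cdot 2^{(g+4)n})$, which for $g \geq 2$ is dominated by the claimed worst-case bound since $g+4 \leq 2g+2$.

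The main work happens inside the loop over $v(x) \in P_{\text{set}}$. For each such $v$, the one-time setup (computing the $\F_2$-subspaces $V$ and $U$, the quotient map $\pi$, and the stabilizer $\Stab_v$ via brute force over $G$) costs $O(\text{poly}(n) \cdot 2^{3n})$ and is dominated by the inner loop. The inner while-loop performs one iteration per $\Stab_v$-orbit in $Q$, each iteration applying every element of $\Stab_v$ to the current candidate at cost $O(\text{poly}(n) \cdot |\Stab_v|)$. The total work for a given $v$ is therefore bounded by $O(\text{poly}(n) \cdot |Q| \cdot |\Stab_v|)$ in the worst case, and is $O(\text{poly}(n) \cdot |Q|)$ when the $\Stab_v$-action on $Q$ is close to free so that orbits have near-maximal size.

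Summing for the worst case, I would use that each orbit contributes $|\Stab_v|$ to $\sum_{v \in P_{\text{set}}} |\Stab_v|$ and $|O_v| \cdot |\Stab_v| \geq |\Stab_v|$ to $\sum_{f \in X} |\Stab_G(f)|$, so $\sum_{v \in P_{\text{set}}} |\Stab_v| \leq \sum_{f \in X} |\Stab_G(f)| = O(2^{(g+1)n})$ by Proposition~\ref{prop:stab_size}. Combined with Lemma~\ref{lemma:counting_us}, this gives a total worst-case cost of
\[O\bigl(\text{poly}(n) \cdot |Q| \cdot \textstyle\sum_v |\Stab_v|\bigr) = O(\text{poly}(n) \cdot 2^{(g+1)n} \cdot 2^{(g+1)n}) = O(\text{poly}(n) \cdot 2^{(2g+2)n}).\]
For the expected bound, one argues that for generic $v$, the orbits of $\Stab_v$ on $Q$ have typical size $\Theta(|\Stab_v|)$, reducing the per-$v$ cost to $O(\text{poly}(n) \cdot |Q|)$; then Corollary~\ref{cor:burnside} and Lemma~\ref{lemma:counting_us} yield $\sum_v |Q| = |P_{\text{set}}| \cdot |Q| = O(2^{(g-2)n}) \cdot O(2^{(g+1)n}) = O(2^{(2g-1)n})$. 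The main obstacle is making the expected-case claim rigorous: one would need to show that for most $v$ the locus of $u \in Q$ with non-trivial $\Stab_v$-stabilizer has small codimension, or otherwise justify the heuristic, since the theorem itself is a statement about ``expected'' rather than strictly worst-case behavior.
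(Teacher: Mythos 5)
Your decomposition — preprocessing, then a sum over $v(x)\in P_{\text{set}}$ of $|Q|\times|\Stab_v|$ controlled by Lemma~\ref{lemma:counting_us}, Corollary~\ref{cor:burnside}, and Proposition~\ref{prop:stab_size} — is essentially the paper's argument, and your worst-case accounting via $\sum_{v\in P_{\text{set}}}|\Stab_v|\le\sum_{f\in X}|\Stab_G(f)|=\mathcal{O}(2^{(g+1)n})$ is, if anything, a little cleaner than the paper's use of the crude bound $|\Stab_v|\le|\PGL_2(\F_{2^n})|=\mathcal{O}(2^{3n})$ (both yield the same exponent $2g+2$). Your candor that the "expected" bound rests on the heuristic that typical stabilizers are trivial (equivalently, typical $\Stab_v$-orbits on $Q$ are near-maximal) is fair: the paper is no more rigorous here, asserting only that the stabilizer has "expected size $\mathcal{O}(1)$" by Proposition~\ref{prop:stab_size}, which controls the average of $|\Stab_G(f)|$ over $f\in X$ rather than over orbit representatives, so neither write-up fully closes that gap.

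There is one concrete flaw in your version: the preprocessing bound. You cost the orbit computation naively as (number of group elements) $\times$ (size of $X$) $=\mathcal{O}(\text{poly}(n)\,2^{(g+4)n})$ and check it only against the \emph{worst-case} target $2^{(2g+2)n}$. But the theorem also asserts the \emph{expected} runtime $2^{(2g-1)n}$, and the preprocessing is deterministic, so it must fit under that bound as well; $g+4\le 2g-1$ fails for $g=3,4$, precisely the genera the paper cares about. The paper avoids this by costing the orbit enumeration as (size of domain) $\times$ (number of generators) $\times$ (cost of one action): since $\GL_2(\F_{2^n})$ is generated by $\mathcal{O}(1)$ elements, a standard orbit algorithm (GAP's \texttt{OrbitsDomain()}) runs in $\text{poly}(n)\cdot 2^{(g+1)n}$, which is dominated by $2^{(2g-1)n}$ for all $g\ge 2$. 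You should replace the naive pass over all of $\PGL_2(\F_{2^n})$ with this generator-based bound; with that substitution your proof goes through exactly as the paper's does.
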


\begin{proof}
    Computing the set $P_{set}$ makes use of GAP's \cite{GAP} \texttt{OrbitsDomain()} function. Computing each orbit has cost $\textit{size of orbit} \times \textit{number of generators} \times \textit{cost of group action}$. Since $\GL_2(\F_{q^n})$ can be minimally generated by three generators, and the cost of computing the group action is polynomial in $n$, the total cost of this step is \[\mathcal{O}(1)\times\text{poly}(n)\times\frac{1}{2^n-1}\#(\F_{2^n}[x]/(x^{g+2})) = \text{poly}(n)\times2^{(g+1)n}.\] We see that this is negligible compared to the second stage of the algorithm, so it can be ignored. 

    For the expensive step of the computation, Corollary~\ref{cor:num_vs} gives us that the number of $v(x)$ we need to iterate through is bounded by $\O(2^{n(g-2)})$. For each of these, we must iterate through $\O(2^{(g+1)n})$ possible $u(x)$. At each step, we have to iterate through $\Stab_G(v(x))$, which has expected size $\O(1)$ (by Proposition~\ref{prop:stab_size}), but has size $\O(q^3)$ in the worst case. This gives the claimed runtimes. 
\end{proof}


\begin{remark}
    It is not possible to make use of GAP's \cite{GAP} orbit computation functionality in Lines \textbf{17-19}, since the set \texttt{initial} is not necessarily closed under the group action.
\end{remark}

\begin{remark}
    For genus 3, we ran our implementation on an AMD Opteron 6176 for small finite fields. The runtimes are listed below (assuming a single core is being used):
    \begin{itemize}
        \item $\F_2$: ~40 CPU-seconds
        \item $\F_4$: ~120 CPU-seconds
        \item $\F_8$: ~ 30 CPU-minutes
        \item $\F_{16}$: ~15 CPU-hours
        \item $\F_{32}$: ~650 CPU-hours
    \end{itemize}
    The small fields are likely significantly impacted by computational overhead (such as launching sage). Our implementation is highly parallelized, so the real-world time can be reduced significantly.
\end{remark}





\newpage

\nocite{tange_2025_15717374}

\pagebreak
\printbibliography

\end{document}